\documentclass[11pt]{amsart}
\usepackage[utf8]{inputenc}
\usepackage[T1]{fontenc}
\usepackage{amssymb}
\usepackage[english]{babel}
\usepackage{enumitem}

\theoremstyle{plain}
\newtheorem{theorem}{Theorem}[section]
\newtheorem{lemma}[theorem]{Lemma}
\newtheorem{proposition}[theorem]{Proposition}

\theoremstyle{definition}

\newtheorem{notation}[theorem]{Notation}

\theoremstyle{remark}
\newtheorem{remark}[theorem]{Remark}

\newcommand{\Aut}{\operatorname{Aut}}
\newcommand{\Inn}{\operatorname{Inn}}
\newcommand{\Out}{\operatorname{Out}}
\newcommand{\Outc}{\Out_c}
\newcommand{\OutCol}{\Out_{\mathrm{Col}}}
\newcommand{\Autc}{\Aut_c}
\newcommand{\AutCol}{\Aut_{\mathrm{Col}}}
\newcommand{\AutZZ}{\Aut_{\mathbb{Z}}}

\newcommand{\conj}{\operatorname{conj}}

\title[Coleman automorphisms and Wreathed Sylow 2-subgroups]{Class-preserving Coleman automorphisms of finite groups with Wreathed Sylow 2-subgroups}
\author[R.~Aragona]{Riccardo Aragona}
\address{DISIM \\
	Universit\`a degli Studi dell'Aquila\\
	via Vetoio\\
	I-67100 Coppito (AQ)\\
	Italy}
\email{riccardo.aragona@univaq.it}
\thanks{The author's ORCiD: 0000-0001-8834-4358}
\thanks{The author is member of INdAM-GNSAGA (Italy).}
\thanks{The author declares that he has no conflict of interest.}
\subjclass[2020]{20D05, 20D20, 20D45, 16U70}
\keywords{Class-preserving automorphism, Coleman automorphism, Wreathed 2-group, Sylow 2-subgroup, normalizer problem, integral group ring}
\date{}

\begin{document}
	\maketitle
	
	\begin{abstract}
		We show that if $G$ is a finite group whose Sylow $2$-subgroups are wreathed, then the intersection $\Outc(G) \cap \OutCol(G)$ has odd order, where $\Outc(G)$ and $\OutCol(G)$ denote the class-preserving and Coleman outer automorphism groups, respectively. This implies that $G$ satisfies the normalizer problem for its integral group ring. Combined with earlier work on the dihedral and semidihedral cases, this settles the question for all three families of $2$-groups of $2$-rank two classified by Gorenstein--Walter and Alperin--Brauer--Gorenstein.
	\end{abstract}
	
	%% ========================================================================
	\section{Introduction}
	%% ========================================================================
	
	Let $G$ be a finite group. An automorphism $\varphi \in \Aut(G)$ is \emph{class-preserving} if $g$ and $g\varphi$ are conjugate for every $g \in G$. It is a \emph{Coleman automorphism} if, for every prime $p$ dividing $|G|$ and every Sylow $p$-subgroup $P$ of $G$, there exists $g_P \in G$ such that $x\varphi = g_P^{-1} x g_P$ for all $x \in P$. The study of class-preserving automorphisms goes back to Burnside~\cite{Burnside}, and has been the subject of extensive research; we mention~\cite{Gross, Wall} for early contributions and~\cite{Hertweck2001a, Mazur} for more recent developments.
	
	The interest in Coleman automorphisms stems from their role in the study of the unit group of the integral group ring $\mathbb{Z}G$. Coleman~\cite[Theorem~1]{Coleman} showed that certain automorphisms arising from units of $\mathbb{Z}G$ restrict to inner automorphisms on each Sylow subgroup. More precisely, let $\AutZZ(G)$ denote the group of automorphisms of $G$ induced by units of $\mathbb{Z}G$. Combining Coleman's result with a theorem of Jackowski and Marciniak~\cite[Proposition~2.3]{JM}, one obtains $\AutZZ(G) \leq \Autc(G) \cap \AutCol(G)$, where $\Autc(G)$ and $\AutCol(G)$ denote the groups of class-preserving and Coleman automorphisms, respectively. Krempa~\cite[Theorem~3.2]{JM} further proved that $\AutZZ(G)$ is an elementary abelian $2$-group.
	
	These facts have a direct bearing on a well-known open question in the theory of group rings. The \emph{normalizer problem}, formulated by Sehgal~\cite[Problem~43]{Sehgal}, asks whether
	\[
	N_{U(\mathbb{Z}G)}(G) = G \cdot Z(U(\mathbb{Z}G))
	\]
	holds for every finite group $G$. It is known that an affirmative answer is equivalent to $\AutZZ(G) = \Inn(G)$; see~\cite{Jespers} for a detailed account. Denoting
	\[
	\Outc(G) = \Autc(G)/\Inn(G) \qquad \text{and} \qquad \OutCol(G) = \AutCol(G)/\Inn(G),
	\]
	the observations above imply that a sufficient condition for $\AutZZ(G) = \Inn(G)$ -- and hence for the normalizer problem to have a positive answer -- is:
	\begin{equation}\tag{1}\label{eq:prop1}
		\Outc(G) \cap \OutCol(G) \text{ is of odd order.}
	\end{equation}
	
	Property~\eqref{eq:prop1} has been verified for several families of finite groups; see~\cite{Aragona1, HK2002, MR, VanAntwerpen, ZG} for positive results. A noteworthy counterexample was provided by Hertweck~\cite{Hertweck2002b}, demonstrating that the property fails in general.
	
	A natural and systematic approach to this problem is to classify groups satisfying Property~\eqref{eq:prop1} according to the isomorphism type of their Sylow $2$-subgroups. In this direction, Hertweck~\cite{Hertweck2002} proved that groups with dihedral Sylow $2$-subgroups satisfy~\eqref{eq:prop1}, and the author~\cite{Aragona2} recently obtained the analogous result for semidihedral Sylow $2$-subgroups. We recall that finite simple groups of $2$-rank two were classified by Gorenstein and Walter~\cite{GW} in the dihedral case, and by Alperin, Brauer, and Gorenstein~\cite{ABG} in the semidihedral and wreathed cases. The goal of the present paper is to settle the remaining wreathed case.
	
	\begin{theorem}\label{thm:main}
		If $G$ is a finite group with wreathed Sylow $2$-subgroups, then $\Outc(G) \cap \OutCol(G)$ is of odd order. In particular, $G$ satisfies the normalizer problem.
	\end{theorem}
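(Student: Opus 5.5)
We argue by contradiction, following the strategy Hertweck used for dihedral Sylow $2$-subgroups and the one used for the semidihedral case. Suppose $\Outc(G)\cap\OutCol(G)$ has even order. Then there is $\varphi\in\Autc(G)\cap\AutCol(G)$ whose class in $\Out(G)$ has order $2$. Among all counterexamples we take $G$ of minimal order.

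\textbf{Step 1: reduction to $O_{2'}(G)=1$.} Let $N=O_{2'}(G)$. Since $\varphi$ is Coleman, it acts on a Sylow $2$-subgroup $P$ as conjugation by some $g_P$. Composing with an inner automorphism, we may assume $\varphi|_P=\mathrm{id}$ and that $\varphi$ has $2$-power order. We then combine two known facts:
\begin{enumerate}[label=(\roman*)]
\item Coleman automorphisms of $2$-power order that act trivially modulo a normal $2'$-subgroup are inner (Hertweck--Kimmerle);
\item Hertweck's reduction lemma from~\cite{Hertweck2002}: the quotient $G/N$ again has wreathed Sylow $2$-subgroups, and $\varphi$ induces a class-preserving Coleman automorphism of $G/N$.
\end{enumerate}
Together these let us pass to $G/N$. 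So we may assume $O_{2'}(G)=1$. In the same way we may assume $O_2(G)$ is small: if $O_2(G)=P$, then $G$ is $2$-closed. In that case we use the known result that $2$-power order Coleman automorphisms of groups with a normal Sylow $2$-subgroup are handled by $C_G(O_2(G))\le O_2(G)$ arguments. This reduces to $\Outc(P)\cap\OutCol(P)$ restricted to $P$-trivial automorphisms, which forces inner ones.

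\textbf{Step 2: structure via Alperin--Brauer--Gorenstein.} With $O_{2'}(G)=1$ and $P\cong C_{2^n}\wr C_2$, the classification of groups with wreathed Sylow $2$-subgroups applies. It says that either $G$ is $2$-nilpotent-like (handled by Step 1), or $F^*(G)$ is a simple group $L$ isomorphic to $PSL_3(q)$ with $q\equiv 1\pmod 4$, to $PSU_3(q)$ with $q\equiv 3\pmod 4$, or to a small exceptional case. Moreover $G/L\le\Out(L)$ is of odd order extended by a subgroup fixing the wreathed structure; indeed $|G:L|$ is odd, since $L$ already contains a full wreathed Sylow $2$-subgroup.

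Since $C_G(L)=1$, the automorphism $\varphi$ restricts to a Coleman, class-preserving automorphism of $L$ that commutes appropriately with $G$. We then use the following description: $\Aut(L)=P\Gamma L$-type, generated by inner, diagonal, field and graph automorphisms.

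\textbf{Step 3: the main obstacle.} We must show that $\varphi|_L$ is inner in $L$, up to odd-order elements, modulo what $G$ induces. For $PSL_3(q)$ and $PSU_3(q)$ we intend to use the following facts:
\begin{enumerate}[label=(\roman*)]
\item Class-preserving automorphisms of simple groups are inner, by Feit--Seitz. Hence $\varphi|_L$ is conjugation by some $x\in\Aut(L)$ lying in the class-preserving part. Feit--Seitz shows such $x$ lies in $\Inn(L)$ for these groups, apart from the diagonal automorphism group of order $\gcd(3,q\mp1)$, which has odd order.
\item Therefore the image of $\varphi$ in $\Out(G)$ is represented by conjugation by an element $x\in\Aut(L)$ normalizing $G$ with $x^2\in G$ modulo odd parts. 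Since diagonal outer automorphisms have order $3$, the $2$-part of $x$ can be absorbed into $G$.
\end{enumerate}

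The delicate point is the last step: passing from ``inner on $L$ up to an odd element'' to ``$\varphi$ has odd order in $\Out(G)$''. We will handle it via the Coleman condition on $P$. Since $\varphi|_P=\mathrm{id}$, the element $x$ centralizes $P\le L$. But $C_{\Aut(L)}(P)$ is a $2$-group times odd, and equals $Z(P)$ up to odd-order diagonal elements. This forces $\varphi$ to be inner times odd, contradicting the choice of $\varphi$.

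The remaining small exceptions, such as $M_{11}$ (semidihedral, excluded) and $PSL_3(3)$-like groups, will be checked directly from the Atlas.
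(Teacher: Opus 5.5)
Your Step 1 does not work, and everything after it depends on it. Fact (i) is not a theorem of Hertweck--Kimmerle. Lemma~\ref{lem:identity-trick} applies only when $\varphi$ fixes the normal subgroup $N$ element-wise \emph{and} is trivial on $G/N$. Lemma~\ref{lem:reduction}(ii) transfers Property~\eqref{eq:prop1} upward, from a normal subgroup of odd index to $G$, not from $G/O_{2'}(G)$ back to $G$. Minimality tells you that $\varphi$ is inner on $G/O_{2'}(G)$, but that says nothing about how $\varphi$ acts on $O_{2'}(G)$. In general the claim is false. If I recall correctly, Hertweck's counterexample to the normalizer problem has a normal Sylow $97$-subgroup $Q$. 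Then $G/O_{2'}(G)=G/Q$ is a $2$-group, on which every Coleman automorphism is inner, yet $G$ has a non-inner class-preserving Coleman automorphism of order $2$. So the reduction to $O_{2'}(G)=1$ discards exactly the configuration where counterexamples live. Your argument also uses the wreathed structure of $S$ only through the classification of simple groups, so it would ``prove'' the property for groups where it fails.

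The paper goes the other way. Lemma~\ref{lem:O2prime-nontrivial} shows that a minimal counterexample must have $O_{2'}(F(G))\neq 1$. The case where $F(G)$ is a $2$-group contains your almost simple situation, and the paper disposes of it in a few lines, using a prime $p$ for which every $p$-central automorphism of the simple group is inner. The real work is the analysis of a minimal normal subgroup $M\le O_{2'}(F)$, in this order:
\begin{enumerate}[label=(\alph*)]
\item a complement $K$ fixed element-wise by $\varphi$, and a $2$-element $k\in K\cap S$ inverting $M$ with $[k,G]\le M\,O_2(G)$;
\item $O_2(G)\neq 1$ and $O_2(G)\le Z(S)$;
\item $M$ is cyclic, via $D=\langle ab^{-1},t\rangle$, the equality $C_W(D)=Z(W)$, and the $2$-rank of $W$;
\item finally $k\in Z(S)\le\Phi(S)\le S\cap C_K(M)$, which contradicts that $k$ inverts $M$.
\end{enumerate}
Nothing in your proposal plays this role. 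Even in the case $O_{2'}(G)=1$, your Step 2 misstates the structure. $GL_2(5)$ has Sylow $2$-subgroup $C_4\wr C_2$ and $O_{2'}(GL_2(5))=1$. However, $F^*(GL_2(5))=Z\circ SL_2(5)$ is not simple, and $O_2(GL_2(5))=Z\cong C_4$ is nontrivial but not a Sylow subgroup. So this configuration is covered neither by your remark on $O_2(G)=P$ nor by Steps 2--3.
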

	
	The proof is by minimal counterexample, along the lines of~\cite{Hertweck2002} and~\cite{Aragona2}. Nevertheless, the wreathed case presents distinctive challenges that require new arguments. A wreathed $2$-group $W \cong (\mathbb{Z}_{2^n} \times \mathbb{Z}_{2^n}) \rtimes \mathbb{Z}_2$ has center $Z(W) \cong C_{2^n}$, which is considerably larger than in the semidihedral setting (where $|Z(S)| = 2$), and possesses abelian subgroups of rank~$2$ and order $2^{2n}$. These features prevent a direct transfer of several key steps from~\cite{Aragona2}. A crucial observation specific to the wreathed case is that the center $Z(W)$ is contained in the Frattini subgroup $\Phi(W)$, which ultimately yields the final contradiction.
	
	%% ========================================================================
	\section{Preliminaries}
	%% ========================================================================
	
	In this section, we fix notation and collect the properties of wreathed $2$-groups and the general results that will be needed in the proof.
	
	%% ------------------------------------------------------------------------
	\subsection{Wreathed 2-groups}\label{subsec:wreathed}
	%% ------------------------------------------------------------------------
	
	\begin{notation}\label{not:wreathed}
		A \emph{wreathed 2-group} of order $2^{2n+1}$ (with $n \geq 2$) is a group of the form
		\[
		W = \mathbb{Z}_{2^n} \wr \mathbb{Z}_2 \cong (C_{2^n} \times C_{2^n}) \rtimes C_2.
		\]
		More explicitly, let $a$ and $b$ be elements of order $2^n$, and let $t$ be an element of order~$2$, with
		\[
		W = \langle a, b, t \mid a^{2^n} = b^{2^n} = t^2 = 1,\; ab = ba,\; tat^{-1} = b,\; tbt^{-1} = a \rangle.
		\]
		We write $A = \langle a \rangle \times \langle b \rangle$ for the \emph{base subgroup}, which is the unique abelian subgroup of index~$2$ in $W$. We define the following distinguished elements and subgroups:
		\begin{itemize}[leftmargin=2em]
			\item the \emph{diagonal}: $\Delta = \{ a^i b^i : 0 \leq i < 2^n \} = \langle ab \rangle \cong C_{2^n}$;
			\item the \emph{anti-diagonal}: $\Delta^- = \{ a^i b^{-i} : 0 \leq i < 2^n \} = \langle ab^{-1} \rangle \cong C_{2^n}$.
		\end{itemize}
		We note that $A = \Delta \cdot \Delta^-$ and $\Delta \cap \Delta^- = \langle (ab)^{2^{n-1}} \rangle = \langle a^{2^{n-1}} b^{2^{n-1}} \rangle$, which has order~$2$.
	\end{notation}
	
	The following properties of wreathed $2$-groups are well known; proofs can be found, for instance, in~\cite{ABG} and~\cite{Gorenstein}. We include them here for the reader's convenience.
	
	\begin{lemma}\label{lem:wreathed-properties}
		Let $W$ be a wreathed $2$-group as in Notation~\ref{not:wreathed}. Then the following hold.
		\begin{enumerate}[label=(\roman*)]
			\item\label{it:center} $Z(W) = \Delta = \langle ab \rangle \cong C_{2^n}$.
			
			\item\label{it:derived} $[W, W] = \Delta^- = \langle ab^{-1} \rangle \cong C_{2^n}$, and $W/[W,W] \cong C_{2^n} \times C_2$.
			
			\item\label{it:frattini} $\Phi(W) = \langle a^2, b^2, ab \rangle$, which has index~$4$ in $W$, and $W/\Phi(W) \cong C_2 \times C_2$.
			
			\item\label{it:maximal} $W$ has exactly three maximal subgroups: the base subgroup $A = \langle a \rangle \times \langle b \rangle \cong C_{2^n} \times C_{2^n}$; and two non-abelian subgroups $M_1 = \langle a^2, b^2, at \rangle$ and $M_2 = \langle a^2, b^2, t \rangle$.
			
			\item\label{it:abelian} Every abelian subgroup of $W$ not contained in $A$ has order at most $2^{n+1}$. The unique maximal abelian subgroup of rank~$2$ is $A$.
			
			\item\label{it:quotient-center} $W/Z(W) \cong D_{2^{n+1}}$, the dihedral group of order $2^{n+1}$.
		\end{enumerate}
	\end{lemma}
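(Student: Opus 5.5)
The plan is to verify each item directly from the presentation in Notation~\ref{not:wreathed}, working throughout with the normal form $a^i b^j t^\varepsilon$ ($0\le i,j<2^n$, $\varepsilon\in\{0,1\}$). The one computation everything else rests on is the action of $t$ on $A$: conjugation by $t$ is the coordinate swap $a^ib^j\mapsto a^jb^i$. From this we get
\[
[t,a]=ba^{-1}, \qquad (a^ib^jt)^2=a^{i+j}b^{i+j}.
\]

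For~\ref{it:center}, first compute $C_A(t)$. An element $a^ib^j$ commutes with $t$ exactly when $i=j$, so $C_A(t)=\Delta$. An element $xt$ with $x\in A$ cannot be central: it acts on $A$ as the swap, and $tat^{-1}=b\neq a$. Since $A$ is abelian, $Z(W)=C_A(t)=\Delta$.

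For~\ref{it:derived}, note that $W/\Delta^-$ is abelian, because $a\equiv b$ modulo $\Delta^-$. Hence $[W,W]\le\Delta^-$. Conversely $ba^{-1}\in[W,W]$, and it generates $\Delta^-$. In the quotient $W/\Delta^-$, the image of $a$ has order $2^n$, since $a^k\in\Delta^-$ forces $k\equiv0 \pmod{2^n}$. The image of $t$ is an involution outside $\langle \bar a\rangle$. Counting orders gives $W/[W,W]\cong C_{2^n}\times C_2$.

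For~\ref{it:frattini}, use $\Phi(W)=W^2[W,W]$. The squares above give $a^2$, $b^2$ and $ab=(at)^2$, and $ab^{-1}=ab\cdot b^{-2}$, so $[W,W]\le\langle a^2,b^2,ab\rangle$. It remains to check two things: that every square lies in this group, which is immediate from the square formulas; and that this group is $\{a^ib^j: i\equiv j \pmod 2\}$, which has index $2$ in $A$ and hence index $4$ in $W$. Then $W/\Phi(W)\cong C_2\times C_2$, since $W$ is $2$-generated by $a,t$.

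For~\ref{it:maximal}, the maximal subgroups are the three preimages of the order-$2$ subgroups of $W/\Phi(W)$, namely
\[
A=\langle\Phi,a\rangle,\qquad \langle\Phi,at\rangle,\qquad \langle\Phi,t\rangle .
\]
We then check that the given generators produce these subgroups. For $M_1$ this works, because $(at)^2=ab$. For $M_2$ it is the one point needing care: $\langle a^2,b^2,t\rangle$ does not contain $ab$, so the correct description is $M_2=\langle a^2,b^2,ab,t\rangle$. I would state it in that form. Both $M_1$ and $M_2$ are non-abelian, since they contain an element outside $A$ together with $a^2$, and $t$ does not commute with $a^2$ when $n\ge2$.

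For~\ref{it:abelian}, let $B$ be abelian with $B\not\le A$, and pick $xt\in B$. Then $B\cap A$ centralizes $xt$. Because $A$ is abelian, it therefore centralizes $t$, so $B\cap A\le\Delta$. Hence $|B|\le 2|\Delta|=2^{n+1}$. Every such $B$ is smaller than $|A|=2^{2n}$ because $n\ge2$, and $A$ is the unique abelian subgroup of index $2$. This gives the uniqueness claim.

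For~\ref{it:quotient-center}, the quotient $A/\Delta$ is cyclic of order $2^n$, generated by $a\Delta$. Also $tat^{-1}=b\equiv a^{-1}$ modulo $\Delta$, because $ab\in\Delta$. Finally $t\notin A$ and $t^2=1$. Together these give a dihedral group of order $2^{n+1}$.

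I do not expect a real obstacle. The only steps that need genuine attention are the index count in~\ref{it:frattini} and the correction of the generating set of $M_2$ in~\ref{it:maximal}.
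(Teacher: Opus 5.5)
Your argument is correct and follows the paper's proof item by item:
\begin{itemize}
\item $Z(W)=C_A(t)=\Delta$, via the swap action of $t$ on $A$;
\item $[W,W]=\Delta^-$, from the commutator of $t$ and $a$;
\item $\Phi(W)=W^2[W,W]$, using the square formula $(a^ib^jt)^2=a^{i+j}b^{i+j}$;
\item the maximal subgroups as the preimages of the three subgroups of order $2$ in $W/\Phi(W)$;
\item the bound $|B|\le 2|C_A(xt)|=2^{n+1}$;
\item $\bar b=\bar a^{-1}$ in $W/Z(W)$.
\end{itemize}
Your treatments of (ii) and (iii) are slightly more careful than the paper's. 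In (ii) you check that $W/\Delta^-$ is abelian. In (iii) you prove both inclusions for $\Phi(W)$, rather than one inclusion plus an index count.

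Your correction in (iv) is right, and the paper's proof misses it because it never checks the listed generators. Since $t$ normalizes $\langle a^2,b^2\rangle$, we get $\langle a^2,b^2,t\rangle=\langle a^2,b^2\rangle\rtimes\langle t\rangle$. This group has order $2^{2n-1}$, so it has index $4$ and is not maximal. The third maximal subgroup is $\langle a^2,b^2,ab,t\rangle=\Phi(W)\langle t\rangle$.

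You should be equally explicit about the second sentence of (v), which the paper's proof does not address at all. Read literally, with ``maximal'' meaning maximal under inclusion, it is false. The subgroup $C_W(t)=\langle ab\rangle\times\langle t\rangle\cong C_{2^n}\times C_2$ is abelian and self-centralizing. It is therefore a maximal abelian subgroup of rank $2$ different from $A$. What your argument actually establishes, correctly, is that $A$ is the unique abelian subgroup of order $2^{2n}$, equivalently the unique abelian subgroup of index $2$. State explicitly that this is the reading you are proving, just as you did for $M_2$.
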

	
	\begin{proof}
		\ref{it:center}.\ An element $a^i b^j \in A$ is central if and only if $t(a^i b^j)t^{-1} = a^j b^i = a^i b^j$, that is, $i = j$. An element $a^i b^j t \notin A$ satisfies $(a^i b^j t) a = a^i b^{j+1} t$, while $a (a^i b^j t) = a^{i+1} b^j t$, so centrality is impossible. Hence $Z(W) = \{ a^i b^i : i \} = \langle ab \rangle$.
		
		\ref{it:derived}.\ We compute $[a, t] = a^{-1} t^{-1} a t = a^{-1} b$, and $[b, t] = b^{-1}a = (a^{-1}b)^{-1}$. Since $A$ is abelian, $[W,W] = \langle a^{-1}b \rangle = \langle ab^{-1} \rangle = \Delta^-$.
		
		For the quotient, in $W/[W,W]$ we have $\bar{a} = \bar{b}$ (since $a^{-1}b \in [W,W]$), $\bar{t}^2 = 1$, and $\bar{a}^{2^n} = 1$, so $W/[W,W] = \langle \bar{a} \rangle \times \langle \bar{t} \rangle \cong C_{2^n} \times C_2$.
		
		\ref{it:frattini}.\ The group $W$ is minimally generated by two elements (e.g.\ $\{a, t\}$), so $W/\Phi(W) \cong C_2 \times C_2$ and $\Phi(W)$ has index~$4$ in $W$. We claim $\Phi(W) = \langle a^2, b^2, ab \rangle$. First, $|\langle a^2, b^2, ab \rangle| = 2 \cdot |\langle a^2, b^2 \rangle| = 2 \cdot 2^{2n-2} = 2^{2n-1}$, since $(ab)^2 = a^2b^2 \in \langle a^2, b^2 \rangle$ and $ab \notin \langle a^2, b^2 \rangle$, so $\langle a^2, b^2, ab \rangle$ has index $2^{2n+1}/2^{2n-1} = 4$ in $W$. Moreover, $[W,W] = \langle ab^{-1} \rangle \leq \langle a^2, b^2, ab \rangle$ since $ab^{-1} = ab \cdot b^{-2}$. Also $W^2 \leq \langle a^2, b^2, ab \rangle$ since $a^2, b^2 \in W^2$ and $(a^ib^jt)^2 = a^{i+j}b^{i+j} \in \Delta \leq \langle ab \rangle \leq \langle a^2, b^2, ab \rangle$. Hence $\Phi(W) = W^2[W,W] \leq \langle a^2, b^2, ab \rangle$. Since both have index~$4$, equality holds.
		
		\ref{it:maximal}.\ The three maximal subgroups correspond to the three subgroups of index~$2$ in $W/\Phi(W) \cong C_2 \times C_2$. The kernel of the map $a \mapsto 1$, $t \mapsto \bar{t}$ is $A$. The other two are obtained from the remaining two surjections $W/\Phi(W) \to C_2$.
		
		\ref{it:abelian}.\ If $H \leq W$ is abelian and $H \not\leq A$, then $H$ contains an element $g = a^i b^j t$. A direct computation gives $C_A(g) = \Delta = Z(W)$: indeed, $g(a^rb^s)g^{-1} = a^sb^r$, so $a^rb^s$ is fixed if and only if $r = s$. Therefore $H \leq \langle Z(W), g \rangle$, which has order at most $2^{n+1}$.
		
		\ref{it:quotient-center}.\ Since $ab \in Z(W)$, in $W/Z(W)$ we have $\bar{b} = \bar{a}^{-1}$. Thus $W/Z(W) = \langle \bar{a}, \bar{t} \mid \bar{a}^{2^n} = \bar{t}^2 = 1,\; \bar{t}\bar{a}\bar{t}^{-1} = \bar{a}^{-1} \rangle \cong D_{2^{n+1}}$.
	\end{proof}
	
	We record some properties of centralizers in $W$ that will be used in the proof.
	
	\begin{lemma}\label{lem:centralizers}
		Let $W$ be a wreathed $2$-group as above.
		\begin{enumerate}[label=(\roman*)]
			\item\label{it:cent-t} $C_A(t) = \Delta = Z(W)$.
			\item\label{it:cent-outside} For any $g \in W \setminus A$, we have $C_A(g) = \Delta = Z(W)$.
			\item\label{it:cent-antidiag} $C_W(\Delta^-) = A$.
			\item\label{it:cent-dihedral} Let $D = \langle ab^{-1}, t \rangle$. Then $D \cong D_{2^{n+1}}$, $Z(D) = \langle (ab^{-1})^{2^{n-1}} \rangle \cong C_2$, and $C_W(D) = \Delta = Z(W)$.
		\end{enumerate}
	\end{lemma}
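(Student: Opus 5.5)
The plan is to reduce everything to the single computation already used in the proof of Lemma~\ref{lem:wreathed-properties}\ref{it:abelian}: the conjugation action on $A$ of an element outside $A$. Since $A$ is abelian of index~$2$, every $g \in W \setminus A$ can be written $g = ht$ with $h \in A$. Hence conjugation by $g$ on $A$ coincides with conjugation by $t$, and $t(a^rb^s)t^{-1} = a^sb^r$.

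First I will prove \ref{it:cent-outside}. An element $a^rb^s$ is fixed under $g$ exactly when $a^sb^r = a^rb^s$. Because $A = \langle a\rangle \times \langle b\rangle$ is a direct product, this means $r \equiv s \pmod{2^n}$, so $C_A(g) = \{a^rb^r\} = \Delta$. Then $\Delta = Z(W)$ by Lemma~\ref{lem:wreathed-properties}\ref{it:center}. Part \ref{it:cent-t} is the special case $g = t$.

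For \ref{it:cent-antidiag}, the inclusion $A \le C_W(\Delta^-)$ holds because $A$ is abelian. Conversely, suppose some $g \in W\setminus A$ centralized $ab^{-1}$. Then by \ref{it:cent-outside} we would have $ab^{-1} \in \Delta$, i.e.\ $ab^{-1} = a^ib^i$ for some $i$. This forces $i \equiv 1$ and $i \equiv -1 \pmod{2^n}$, so $2 \equiv 0 \pmod{2^n}$, which is impossible since $n \ge 2$. Hence $C_W(\Delta^-) = A$.

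For \ref{it:cent-dihedral}, the element $x = ab^{-1}$ has order $2^n$, and $t$ has order $2$ with $t \notin \langle x\rangle \le A$. Moreover $txt^{-1} = ba^{-1} = x^{-1}$. So $D = \langle x\rangle \rtimes \langle t\rangle$ has order $2^{n+1}$ and satisfies the dihedral presentation, giving $D \cong D_{2^{n+1}}$. Since $n \ge 2$, we have $|D| \ge 8$, and the center of a dihedral group of order at least $8$ is generated by the unique involution of its cyclic maximal subgroup. Thus $Z(D) = \langle x^{2^{n-1}}\rangle \cong C_2$. Finally, $C_W(D) = C_W(x) \cap C_W(t) = A \cap C_W(t) = C_A(t) = \Delta$, using \ref{it:cent-antidiag} and \ref{it:cent-t}.

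I expect no real obstacle here. The only point needing care is where the hypothesis $n \ge 2$ enters: it is used both to separate $\Delta$ from $\Delta^-$ in \ref{it:cent-antidiag} and to get $|Z(D)| = 2$ in \ref{it:cent-dihedral}.
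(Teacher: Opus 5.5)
Your proposal is correct and uses essentially the same elementary computations as the paper: conjugation by any element of $W\setminus A$ acts on $A$ as the swap $a^rb^s\mapsto a^sb^r$, and $t$ inverts $ab^{-1}$. The only difference is in (iii) and (iv), where you reuse the earlier parts instead of recomputing. In (iii) you obtain the contradiction as $ab^{-1}\notin\Delta$. In (iv) you write $C_W(D)=C_W(ab^{-1})\cap C_W(t)=A\cap C_W(t)=C_A(t)$, whereas the paper lists $C_W(t)$ explicitly and checks that each element $a^ib^it$ inverts $ab^{-1}$. Your version is slightly more economical.
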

	
	\begin{proof}
		\ref{it:cent-t}.\ We have $t(a^ib^j)t^{-1} = a^j b^i$, so $a^i b^j \in C_A(t)$ iff $i = j$, giving $C_A(t) = \Delta$.
		
		\ref{it:cent-outside}.\ Any $g \in W \setminus A$ has the form $g = a^rb^st$, and $g(a^ib^j)g^{-1} = a^jb^i$, so $C_A(g) = \Delta$.
		
		\ref{it:cent-antidiag}.\ Since $A$ is abelian, $A \leq C_W(\Delta^-)$. For $g = a^rb^st \in W \setminus A$, we have $g(ab^{-1})g^{-1} = t(ab^{-1})t^{-1} = ba^{-1} = (ab^{-1})^{-1} \neq ab^{-1}$ (since $|ab^{-1}| = 2^n \geq 4$). So $C_W(\Delta^-) = A$.
		
		\ref{it:cent-dihedral}.\ Since $t(ab^{-1})t^{-1} = (ab^{-1})^{-1}$, we have $D \cong D_{2^{n+1}}$ and $Z(D) = \langle (ab^{-1})^{2^{n-1}} \rangle$. Note that $(ab^{-1})^{2^{n-1}} = a^{2^{n-1}}b^{-2^{n-1}} = a^{2^{n-1}}b^{2^{n-1}} = (ab)^{2^{n-1}} \in \Delta$.
		
		Now $C_W(D) \leq C_W(t)$. By~\ref{it:cent-t} and~\ref{it:cent-outside}, $C_W(t) = \Delta \cup \{a^ib^it : i\}$. But for $g = a^ib^it$, $g(ab^{-1})g^{-1} = a^ib^i \cdot (ba^{-1}) \cdot b^{-i}a^{-i} = a^{i-1}b^{i+1-i}a^{-i} = a^{-1}b = (ab^{-1})^{-1}$, so $g$ inverts $ab^{-1}$ and does not centralize it. Hence $C_W(D) \leq \Delta$. Since $\Delta \leq Z(W) \leq C_W(D)$, we get $C_W(D) = \Delta = Z(W)$.
	\end{proof}
	
	\begin{remark}\label{rem:cent-dihedral-key}
		Lemma~\ref{lem:centralizers}\ref{it:cent-dihedral} shows that $C_W(D) = Z(W) \cong C_{2^n}$, while $Z(D) \cong C_2$. This is a key difference from the semidihedral case: if $g \in Z(W) \setminus Z(D)$, then $\langle D, g \rangle$ has center containing $\langle g, (ab^{-1})^{2^{n-1}} \rangle$, which can be large. The contradiction argument of~\cite[Lemma~3.9]{Aragona2}, based on producing a 2-group with ``center too large for $S$'', does not apply directly here and requires a modified approach.
	\end{remark}
	
	\begin{lemma}\label{lem:normal-subgroups}
		Let $N \trianglelefteq W$ with $N \leq A$. Then $N$ is invariant under the swap automorphism $\sigma \colon a^i b^j \mapsto a^j b^i$ of $A$, and $N = (N \cap \Delta) \cdot (N \cap \Delta^-)$.
	\end{lemma}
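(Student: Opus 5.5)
The plan has two parts. Swap invariance follows from normality alone. The decomposition is attacked by splitting each element of $N$ into a diagonal factor and an anti-diagonal factor. I expect the second part to break at a specific step, which I identify below.

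\emph{Swap invariance.} Conjugation by $t$ restricted to $A$ is exactly $\sigma$, since $t a^i b^j t^{-1} = b^i a^j = a^j b^i$. Because $N \trianglelefteq W$ and $N \leq A$, we get $N\sigma = tNt^{-1} = N$.

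\emph{Splitting an element.} Take $x = a^i b^j \in N$. By swap invariance $x\sigma = a^j b^i \in N$, so two elements of $N$ are available:
\[
x \cdot x\sigma = (ab)^{i+j} \in N \cap \Delta, \qquad x\,(x\sigma)^{-1} = (ab^{-1})^{i-j} \in N \cap \Delta^- .
\]
Multiplying them, and using that $A$ is abelian, gives $x^2 = (ab)^{i+j}(ab^{-1})^{i-j}$. Hence $N^2 \leq (N\cap\Delta)(N\cap\Delta^-)$, and the reverse inclusion $(N\cap\Delta)(N\cap\Delta^-) \leq N$ is trivial.

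\emph{Recovering $x$ from $x^2$.} To get the stated equality, one must write $x$ itself, not $x^2$, as a product. The natural attempt is to halve the exponents: find $u \in N\cap\Delta$ and $v \in N\cap\Delta^-$ with $u^2 v^2 = x^2$, and then compare $x$ with $uv$. The discrepancy $x(uv)^{-1}$ is an involution of $A$ lying in $N$. It is therefore in $\Omega_1(A) = \langle a^{2^{n-1}}, b^{2^{n-1}} \rangle$. So the whole question reduces to how $N$ meets $\Omega_1(A)$.

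\emph{Where the argument fails.} For the involution step there are two cases.
\begin{itemize}[leftmargin=2em]
\item If the discrepancy is $1$ or $(ab)^{2^{n-1}}$, it lies in $\Delta \cap \Delta^-$, hence in both factors, and the argument closes.
\item If it is $a^{2^{n-1}}$ or $b^{2^{n-1}}$, it lies in neither $\Delta$ nor $\Delta^-$, and I do not see how to close the argument.
\end{itemize}
The second case genuinely occurs, so the decomposition as worded cannot be proved. Take $N = \Omega_1(A)$, which is characteristic in $A$ and therefore normal in $W$, and has order $4$. Then
\[
N\cap\Delta = N\cap\Delta^- = \langle (ab)^{2^{n-1}} \rangle,
\]
so the product has order $2$, not $4$.

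\emph{What is actually established.} The statement holds in the following corrected forms:
\begin{itemize}[leftmargin=2em]
\item $N^2 = (N\cap\Delta)(N\cap\Delta^-)$ up to the trivial inclusion above, i.e.\ $N^2 \leq (N\cap\Delta)(N\cap\Delta^-) \leq N$, so $(N\cap\Delta)(N\cap\Delta^-)$ has index at most $2$ in $N$;
\item exact equality whenever $N \cap \Omega_1(A) \leq \Delta$.
\end{itemize}
The write-up should prove one of these in place of the equality as stated.
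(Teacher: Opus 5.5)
Your main conclusion is correct. The statement is false, so it cannot be proved, and your counterexample $N=\Omega_1(A)$ is valid: it is characteristic in $A\trianglelefteq W$, and $N\cap\Delta=N\cap\Delta^-=\langle (ab)^{2^{n-1}}\rangle$.

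Even $N=A$ fails. Since $|\Delta\cap\Delta^-|=2$, the product $\Delta\Delta^-=\{a^ib^j : i\equiv j \pmod 2\}=\Phi(W)$ has index $2$ in $A$ and does not contain $a$. So the assertion $A=\Delta\cdot\Delta^-$ in Notation~\ref{not:wreathed} is also false.

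The paper's proof makes exactly the error you isolate. It claims that $a^ib^j$ is recovered as the product of $(ab)^{i+j}$ and $(ab^{-1})^{i-j}$, but that product is $a^{2i}b^{2j}=x^2$. This also breaks the proof of Lemma~\ref{lem:O2-in-center}. There, a subgroup such as $\Omega_1(A)$ satisfies both bounds derived for $O_2(G)$, namely $N\cap\Delta\le\langle (ab)^2\rangle$ and $|N\cap\Delta^-|\le 2$, yet it is not contained in $\Delta$.

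Your proposed replacements, however, are not correct as written.

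\emph{The halving step.} It can fail outright. For $N=A$ and $x=a$, the elements of $A$ whose square is $a^2$ form the coset $a\,\Omega_1(A)$. This coset is disjoint from $\Delta\Delta^-$, so no $u\in N\cap\Delta$ and $v\in N\cap\Delta^-$ satisfy $u^2v^2=a^2$. Hence the question does not reduce to how $N$ meets $\Omega_1(A)$.

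\emph{Your second bullet.} It is false for every $n\ge 3$. Take $x=ab^{2^{n-1}-1}$.
\begin{itemize}[leftmargin=2em]
\item Since $(2^{n-1}-1)^2\equiv 1\pmod{2^n}$, we have $x\sigma=x^{2^{n-1}-1}$. So $N=\langle x\rangle\trianglelefteq W$.
\item $N\cap\Omega_1(A)=\langle (ab)^{2^{n-1}}\rangle\le\Delta$, so your hypothesis holds.
\item But $x^k\in\Delta$ iff $2^{n-1}\mid k$, and $x^k\in\Delta^-$ iff $2\mid k$.
\item Hence $(N\cap\Delta)(N\cap\Delta^-)=\langle x^2\rangle\neq N$.
\end{itemize}

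\emph{Your first bullet.} Set $P:=(N\cap\Delta)(N\cap\Delta^-)$. The inclusion $N^2\le P$ only gives $|N:P|\le|N:N^2|\le 4$, and $|A:A^2|=4$ shows this inclusion alone cannot give index $2$. The bound $|N:P|\le 2$ is true, but it needs a different argument:
\begin{itemize}[leftmargin=2em]
\item The map $x\mapsto x\cdot x\sigma$ is a homomorphism from $N$ into $N\cap\Delta$, because $C_A(\sigma)=\Delta$.
\item Its kernel is the set of elements of $N$ inverted by $\sigma$, which is exactly $N\cap\Delta^-$.
\item Hence $|N|\le|N\cap\Delta|\,|N\cap\Delta^-|$, which is equivalent to $|N:P|\le|N\cap\Delta\cap\Delta^-|\le 2$.
\end{itemize}
This index bound, not exact equality, is the correct replacement for the lemma.
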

	
	\begin{proof}
		If $a^ib^j \in N$, then $a^jb^i = t(a^ib^j)t^{-1} \in N$. Hence $a^{i+j}b^{i+j} = (a^ib^j)(a^jb^i) \in N \cap \Delta$ and $a^{i-j}b^{j-i} = (a^ib^j)(a^jb^i)^{-1} \in N \cap \Delta^-$. Since $a^ib^j$ can be recovered as the product of these two elements (up to the identification in $\Delta \cap \Delta^-$), the decomposition follows.
	\end{proof}
	
	%% ------------------------------------------------------------------------
	\subsection{General results on Coleman automorphisms}\label{subsec:general}
	%% ------------------------------------------------------------------------
	
	We recall results from~\cite{Hertweck2001a, Hertweck2001b, HK2002} that hold for arbitrary finite groups.
	
	\begin{lemma}[{\cite[Corollary~5]{Hertweck2001a}, \cite[Corollary~3]{HK2002}}]\label{lem:reduction}
		Let $N \trianglelefteq G$ and let $p$ be a prime not dividing $|G/N|$. Then:
		\begin{enumerate}[label=(\roman*)]
			\item If $\varphi \in \Aut(G)$ is a class-preserving $($resp.\ Coleman$)$ automorphism of $p$-power order, then $\varphi$ induces a class-preserving $($resp.\ Coleman$)$ automorphism of $N$.
			\item If $\Outc(N) \cap \OutCol(N)$ is a $p'$-group, then so is $\Outc(G) \cap \OutCol(G)$.
		\end{enumerate}
	\end{lemma}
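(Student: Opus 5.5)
This lemma is quoted from \cite{Hertweck2001a, HK2002}, so I would only reconstruct the standard argument.

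For (i), let $\varphi$ have $p$-power order and set $\psi=\varphi|_N$. Since $N$ is characteristic-free but $\varphi$ preserves conjugacy classes (resp.\ agrees with an inner automorphism on each Sylow subgroup), $N\varphi=N$ in either case: class-preserving $\varphi$ maps the normal subgroup $N$, a union of classes, onto itself; for Coleman $\varphi$, $N$ is generated by its Sylow intersections $N\cap P$, and each is mapped by conjugation into $N$.

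Class-preserving case. For $x\in N$, the set $\{g\in G: x^g=x\varphi\}$ is a coset $gC_G(x)$. The group $\langle\varphi\rangle$, of $p$-power order, acts on $G$-conjugacy data, and we want $g$ chosen in $N$. Consider the $N$-classes inside the $G$-class $x^G$. These are permuted transitively by $G/N$, and $\varphi$ permutes them too, commuting with the $G/N$-action up to twisting by $\varphi$. Since $|G/N|$ is prime to $p$, the number of $N$-classes in $x^G$, namely $|G:NC_G(x)|$, is prime to $p$. Hence the $p$-group $\langle\varphi\rangle$ fixes some $N$-class in $x^G$.

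One then shows it fixes every such class. The point is that $\varphi$ acts on this set of $N$-classes compatibly with a regular-type action of the $p'$-group $G/NC_G(x)$: $\varphi$ induces an automorphism of $G/N$, and the set is a $G/N$-homogeneous space on which $\varphi$ acts semilinearly. The fixed class $y^N$ gives $y\varphi=y^n$. Replacing $\varphi$ by $\varphi\circ\mathrm{conj}(g)^{-1}$, for suitable $g$ with $x\varphi=x^g$, reduces the problem to an element of $G/N$ commuting with a $p$-power-order automorphism. Coprimality then forces $g\in NC_G(x)$, and this is the main obstacle. I would handle it via Glauberman's lemma, that is, a coprime action on a transitive set has a fixed point, applied to $\langle\varphi\rangle$ acting on the coset space. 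This yields $x\varphi\in x^N$.

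Coleman case. For a prime $q$ and $Q\in\mathrm{Syl}_q(N)$, choose $P\in\mathrm{Syl}_q(G)$ with $Q=P\cap N$, so that $\varphi|_P=\mathrm{conj}(g_P)$. If $q\ne p$, the same coprime fixed-point argument applied to cosets of $N_G(Q)\cap\ldots$ moves $g_P$ into $N\,C_G(Q)$. If $q=p$, then $p\nmid|G/N|$ forces $P\le N$, so $P=Q$. By Frattini, $G=N\,N_G(Q)$, and $g_P$ can be adjusted within $N_G(Q)$ modulo centralizers in the same way.

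For (ii), let $\varphi\in\Autc(G)\cap\AutCol(G)$ have $p$-power order modulo $\Inn(G)$. Replace $\varphi$ by a power so that $\varphi$ itself has $p$-power order; this is possible since $\varphi^{m}$ with $m$ the $p'$-part of $|\varphi|$ still generates the Sylow $p$-subgroup of $\langle \varphi\Inn(G)\rangle$. By (i), $\varphi|_N$ lies in $\Autc(N)\cap\AutCol(N)$. By hypothesis, after composing with an inner automorphism, we may take $\varphi|_N$ to be conjugation by some $n\in N$, and after modifying $\varphi$ further, $\varphi|_N=\mathrm{id}$.

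Now take $P\in\mathrm{Syl}_p(G)$, so $P\le N$. Then $\varphi$ fixes $N$ pointwise and has $p$-power order. Hence $\varphi$ acts on $G/N$, a $p'$-group, and $[G,\varphi]$ is controlled by the standard fact that an automorphism of $p$-power order acting trivially on $N$ and on $G/N$ is given by a cocycle in $Z^1(G/N,Z(N))$. Because $\varphi$ is class-preserving, it is trivial on $G/N$, and this $H^1$ vanishes by coprimality, so $\varphi$ is inner. Thus $\Outc(G)\cap\OutCol(G)$ has no elements of order $p$.
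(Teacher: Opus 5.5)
\textbf{There is a genuine gap in the class-preserving case of (i).} The paper only quotes this lemma from the literature, so your argument has to stand on its own. Your orbit-counting/Glauberman step on the set $\Omega$ of $N$-classes in $x^G$ shows only that $\langle\varphi\rangle$ fixes \emph{some} point of $\Omega$. The step ``one then shows it fixes every such class'' is never carried out, and what you offer in its place does not work: $NC_G(x)$ need not be normal, so $G/NC_G(x)$ is not a group, and Glauberman's lemma gives existence of a fixed point, not that the prescribed point $x^N$ is fixed.

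No argument built only from the data you use can succeed, because it never uses that $\varphi$ preserves classes of elements \emph{outside} $N$. Take $G=\mathbb{F}_{16}\rtimes\mathbb{F}_{16}^{*}$ (affine maps $z\mapsto az+v$), $N=\mathbb{F}_{16}$, $p=2$, and $\varphi\colon(v,a)\mapsto(v^4,a^4)$. Then:
\begin{itemize}
\item $\varphi$ has order $2$ and $|G/N|=15$;
\item every $x\in N$ satisfies $x\varphi\in x^G$;
\item $G/N$ acts regularly on $\Omega$, and $\varphi$ acts compatibly.
\end{itemize}
Yet $\varphi|_N\neq\mathrm{id}$, so $\varphi|_N$ is not class-preserving on the abelian group $N$. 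This is consistent with the lemma, since $\varphi$ moves the class of $(0,a)$ to that of $(0,a^4)$. Your Coleman case (``$N_G(Q)\cap\ldots$'', ``in the same way'') has the same gap.

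\textbf{The missing idea is one you assert without proof in (ii): $\varphi$ induces the identity on $\bar G=G/N$.}
\begin{itemize}
\item The induced $\bar\varphi$ is class-preserving (resp.\ Coleman) of order prime to $|\bar G|$.
\item In $\bar G\rtimes\langle\bar\varphi\rangle$, the centralizer of any $\bar x\in\bar G$ (resp.\ of any Sylow subgroup of $\bar G$) covers the quotient by $\bar G$. So it has $p'$-index and contains a $\bar G$-conjugate of the Sylow $p$-subgroup $\langle\bar\varphi\rangle$.
\item Hence $\bar G$ is the union of the conjugates of $C_{\bar G}(\bar\varphi)$ (resp.\ $C_{\bar G}(\bar\varphi)$ contains a Sylow subgroup for every prime). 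This forces $C_{\bar G}(\bar\varphi)=\bar G$.
\end{itemize}

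With this, (i) is short and needs no split into $q=p$ and $q\neq p$. Suppose $x\varphi=x^g$ (resp.\ $\varphi|_P=\conj_g|_P$ with $Q=P\cap N$). Iterating gives $x\varphi^k=x^{g(g\varphi)\cdots(g\varphi^{k-1})}$. Since $g\varphi\equiv g \pmod N$, taking $p^a$ to be the order of $\varphi$ yields $g^{p^a}\in C_G(x)N$ (resp.\ $C_G(P)N$). As $gN$ has $p'$-order, $g\in C_G(x)N$, so $x\varphi\in x^N$ (resp.\ $\varphi|_Q$ is conjugation by an element of $N$).

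\textbf{Two smaller repairs are needed in (ii).}
\begin{itemize}
\item Composing with $\conj_n^{-1}$ may destroy $p$-power order. Fix this by replacing the new automorphism with its power whose exponent is the $p'$-part of its order.
\item $H^1(G/N,Z(N))$ need not vanish, since $Z(N)$ need not be a $p$-group. You must first observe that the cocycle $\delta(g)=g^{-1}(g\varphi)$ satisfies $g\varphi^k=g\,\delta(g)^k$, so $\delta$ takes values in $O_p(Z(N))$. This is the content of Lemma~\ref{lem:identity-trick}.
\end{itemize}
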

	
	\begin{lemma}[{\cite[Lemma~5]{Hertweck2001a}}]\label{lem:minimal-counter}
		Let $p$ be a prime, and let $G$ be a finite group admitting a non-inner automorphism of $p$-power order which preserves conjugacy classes of $p$-elements but induces inner automorphisms on all proper factor groups. Then:
		\begin{enumerate}[label=(\roman*)]
			\item The Frattini subgroup $\Phi(G)$ is a $p$-group.
			\item If Sylow $p$-subgroups of $G$ are abelian, then $O_p(G) = 1$.
			\item If $O_p(G) = 1$ and $C_G(N) \leq N$ for some $1 \neq N \trianglelefteq G$, then $\varphi|_N \neq \gamma|_N$ for all $\gamma \in \Inn(G)$.
		\end{enumerate}
	\end{lemma}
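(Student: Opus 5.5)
The statement is Hertweck's \cite[Lemma~5]{Hertweck2001a}. I would prove its three parts in the order (iii), (ii), (i), because the centralizer computation behind (iii) is also the engine of the other two. Throughout, $\varphi$ is the given non-inner automorphism of $p$-power order. Since $\varphi$ is inner on every proper quotient $G/N$, we may multiply it by an inner automorphism of $G$ so that it acts trivially on $G/N$. Multiplying by inner automorphisms can destroy the $p$-power order, so each time we must restore it by passing to the $p$-part of the product. That $p$-part is still non-inner modulo $\Inn(G)$ and still preserves classes of $p$-elements.

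\textbf{Part (iii).} Suppose $\varphi|_N=\gamma|_N$ for some $\gamma\in\Inn(G)$, and put $\psi=\varphi\gamma^{-1}$, so $\psi$ fixes $N$ pointwise. For $g\in G$ and $n\in N$ we have $g^{-1}ng\in N$, hence
\[
g^{-1}ng=(g^{-1}ng)\psi=(g\psi)^{-1}\,n\,(g\psi).
\]
So $(g\psi)g^{-1}\in C_G(N)\le N$; since it centralizes $N$, it lies in $Z(N)$. Write $g\psi=z_g g$ with $z_g\in Z(N)$. The automorphisms fixing $N$ pointwise with this shape form an abelian group, since $g\mapsto z_g$ is a $1$-cocycle into $Z(N)$. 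The $p$-part $\psi'$ of $\psi$ is still non-inner and has values in the $p$-part of the abelian normal subgroup $Z(N)$. That $p$-part is characteristic in the normal subgroup $Z(N)$, hence lies in $O_p(G)=1$. Thus $\psi'=\mathrm{id}$, contradicting that $\varphi$ is non-inner.

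\textbf{Part (ii).} Take $N$ minimal normal in $G$ with $N\le O_p(G)$, so $N$ is elementary abelian, and adjust $\varphi$ to act trivially on $G/N$. Then $g\mapsto g^{-1}(g\varphi)$ is a derivation $\delta\colon G\to N$. Let $P\ge N$ be a Sylow $p$-subgroup. Restriction $H^1(G/N,N)\to H^1(P/N,N)$ is injective, because $[G:P]$ is prime to $p$. So it suffices to show that $\delta|_P$ is inner, i.e.\ given by some $m\in N$.

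Here the abelian hypothesis is used. Every $x\in P$ is $G$-conjugate to $x\varphi\in P$. By Burnside's fusion theorem, $G$-fusion in the abelian group $P$ is controlled by $N_G(P)$, which acts on $P$ through the $p'$-group $N_G(P)/C_G(P)$. Since $x\varphi\equiv x \pmod N$, I expect a coprime-action argument to force $\varphi|_P$ to agree with conjugation by an element of $P$. Once $\delta$ is inner, $\varphi$ is inner, which is a contradiction.

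\textbf{Part (i).} Since $\Phi(G)$ is nilpotent, it suffices to rule out a nontrivial Sylow $q$-subgroup $Q$ of $\Phi(G)$ with $q\ne p$. By the Frattini argument $G=\Phi(G)N_G(Q)$, so $Q\trianglelefteq G$. Pick $N\le Q$ minimal normal in $G$; it is an elementary abelian $q$-group. Adjust $\varphi$ to be trivial on $G/N$. The automorphisms trivial on both $G/N$ and $N$ form a $q$-group, since they correspond to $Z^1(G/N,N)$. So the $p$-element $\varphi$ acts nontrivially and coprimely on $N$, giving $N=C_N(\varphi)\times[N,\varphi]$ with $[N,\varphi]\neq 1$.

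Since $\varphi$ is trivial on $G/N$, the subgroup $[N,\varphi]$ is normal in $G$, so minimality gives $N=[N,\varphi]$. I then want to use $N\le\Phi(G)$ to show that an automorphism of coprime order acting trivially on $G/N$ cannot move $N$ without being trivial. I expect this to go through a Gaschütz-type complement argument: a coprime automorphism fixing $G/N$ normalizes some supplement, and supplements to $N\le\Phi(G)$ are all of $G$. This step, and the corresponding fusion step in (ii), are where I expect the real difficulty.
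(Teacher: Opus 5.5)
Your part (iii) is correct. It is the mechanism of Lemma~\ref{lem:identity-trick}: the automorphisms $g\mapsto z_g g$ embed into the abelian group $Z^1(G/N,Z(N))$, so the $p$-part takes values in $O_p(Z(N))\le O_p(G)=1$. The paper gives no proof of this lemma (it simply cites Hertweck), so the comparison has to be against a complete argument. Measured that way, (i) and (ii) are not proved: in each, the decisive step is only announced. For (i) the route you indicate cannot work as phrased. Inside $G$ the only supplement of $N\le\Phi(G)$ is $G$ itself, which $\varphi$ normalizes trivially, so ``supplements are all of $G$'' yields nothing. The missing idea is to work in $X=G\rtimes\langle\varphi\rangle$, with $\varphi$ already adjusted to be trivial on $G/N$ and of $p$-power order. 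Since $[G,\varphi]\le N$, the subgroup $N\langle\varphi\rangle$ is normal in $X$ and has $\langle\varphi\rangle$ as a Sylow $p$-subgroup. The Frattini argument then gives $X=N\,N_X(\langle\varphi\rangle)$. Because $N\le\Phi(G)\le\Phi(X)$ (the Frattini subgroup of a normal subgroup lies in that of the whole group), we get $\langle\varphi\rangle\trianglelefteq X$. Hence $[G,\varphi]\le G\cap\langle\varphi\rangle=1$, i.e.\ $\varphi=\mathrm{id}$, a contradiction. This works directly for $N=O_{p'}(\Phi(G))$, so your reduction to a minimal normal $N$ with $N=[N,\varphi]$ is unnecessary.

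In (ii) two things are missing. First, $\delta(g)=g^{-1}(g\varphi)$ is a derivation on $G$, and to view it in $H^1(G/N,N)$ you need $\varphi|_N=\mathrm{id}$. This holds, but needs an argument. Since $\varphi$ centralizes $G/N$ and $N$ is abelian, $\varphi|_N$ commutes with the $G$-action. It is therefore a unit of the finite field $\operatorname{End}_G(N)$ (Schur), whose multiplicative group has order prime to $p$; having $p$-power order, $\varphi|_N=\mathrm{id}$. Second, and this is the real gap: $\varphi|_P=\mathrm{id}$ (which is what ``conjugation by an element of $P$'' means, $P$ being abelian) does not follow merely from $x\varphi=x^h\in xN$ with $h\in N_G(P)$ acting as a $p'$-element. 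If $x=yn$ with $h$ fixing $y$ and moving $n\in N$, then $x^h\in xN$ but $x^h\neq x$. You must choose good coset representatives. The stabilizer $H_x$ of $xN$ in the $p'$-group $N_G(P)/C_G(P)$ acts on the coset $xN$ through a crossed homomorphism into $N$. Since $H^1(H_x,N)=0$, there is some $x_0\in xN$ fixed by $H_x$. By Burnside, $x_0\varphi=x_0^h$ for some $h\in N_G(P)$, and $x_0\varphi\in x_0N$ forces the image of $h$ in $N_G(P)/C_G(P)$ to lie in $H_x$, so $x_0\varphi=x_0$. Combined with $\varphi|_N=\mathrm{id}$, this gives $\varphi=\mathrm{id}$ on $xN$. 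Only then does your injectivity of restriction $H^1(G/N,N)\to H^1(P/N,N)$ finish the argument.
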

	
	\begin{lemma}[{\cite[Lemma~2]{Hertweck2001a}}]\label{lem:identity-trick}
		Let $p$ be a prime, and let $\varphi$ be an automorphism of $G$ of $p$-power order. Assume there exists $N \trianglelefteq G$ such that $\varphi$ fixes all elements of $N$ and induces the identity on $G/N$. Then $\varphi$ induces the identity on $G/O_p(Z(N))$. If, in addition, $\varphi$ fixes a Sylow $p$-subgroup of $G$ element-wise, then $\varphi$ is inner.
	\end{lemma}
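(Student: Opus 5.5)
The plan is to encode $\varphi$ by a cocycle with values in $N$ and then use the standard transfer/restriction argument in cohomology. Since $\varphi$ induces the identity on $G/N$, for each $g \in G$ we can write $g\varphi = g\,\delta(g)$ with $\delta(g) \in N$.

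\textbf{Step 1: $\delta$ takes values in $Z(N)$.} Fix $g \in G$ and $n \in N$. Since $g^{-1}ng \in N$ is fixed by $\varphi$,
\[
g^{-1}ng = (g^{-1}ng)\varphi = \delta(g)^{-1} g^{-1} n g\, \delta(g).
\]
Thus $\delta(g)$ centralizes $g^{-1}Ng = N$, so $\delta(g) \in C_N(N) = Z(N)$.

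\textbf{Step 2: $\delta$ is a $1$-cocycle.} Expanding $(gh)\varphi = g\varphi\, h\varphi$ gives
\[
\delta(gh) = h^{-1}\delta(g)h\cdot\delta(h).
\]
So $\delta$ is a $1$-cocycle (crossed homomorphism) from $G$ into the abelian group $Z(N)$, which is a $G$-module under conjugation.

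\textbf{Step 3: $\delta$ lands in $O_p(Z(N))$.} Because $\varphi$ fixes $\delta(g) \in N$, induction gives $g\varphi^k = g\,\delta(g)^k$ for all $k$. As $\varphi$ has $p$-power order $p^m$, we get $\delta(g)^{p^m} = 1$. Hence $\delta(g)$ lies in the Sylow $p$-subgroup $O_p(Z(N))$ of the abelian group $Z(N)$. This says exactly that $\varphi$ induces the identity on $G/O_p(Z(N))$, which is the first assertion.

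\textbf{Step 4: $\delta$ is a coboundary.} For the second assertion, put $Z = O_p(Z(N))$, an abelian $p$-group on which $G$ acts by conjugation, and regard $\delta$ as a class in $H^1(G,Z)$. If $\varphi$ fixes a Sylow $p$-subgroup $P$ elementwise, then $\delta|_P = 1$, so the restriction of this class to $H^1(P,Z)$ vanishes. Since $\mathrm{cor}\circ\mathrm{res}$ is multiplication by $|G:P|$, which is prime to $p$, and $H^1(G,Z)$ is annihilated by $|Z|$, a power of $p$, restriction is injective. Therefore $\delta$ is a coboundary: there is $z \in Z$ with $\delta(g) = g^{-1}z^{-1}g\,z$ (for the appropriate sign convention). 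Then
\[
g\varphi = g\,g^{-1}z^{-1}gz = z^{-1}gz,
\]
so $\varphi$ is conjugation by $z$ and is inner.

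To avoid cohomology, one can instead take $z$ explicitly. Let $T$ be a right transversal of $P$ in $G$, let $q$ be an inverse of $|G:P|$ modulo $|Z|$, and set $z = \bigl(\prod_{r \in T}\delta(r)\bigr)^{\mp q}$. This is just the transfer argument written out.

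The only delicate point is in Step 4: keeping the left/right action and sign conventions consistent, so that the coboundary really yields conjugation by an element of $Z$. Steps 1–3 are direct computations.
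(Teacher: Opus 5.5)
Your proof is correct. The paper gives no argument of its own here (the lemma is quoted from Hertweck), and your route is the standard proof of that cited result: the crossed homomorphism $g\mapsto g^{-1}(g\varphi)$ with values in $O_p(Z(N))$, followed by injectivity of restriction to a Sylow $p$-subgroup (equivalently, the explicit transfer computation).

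The sign you left open in the explicit formula is in fact forced. Write $r'$ for the element of $T$ with $Prg=Pr'$, and use $\delta|_P=1$, so that
\[
g^{-1}\delta(r)g\,\delta(g)=\delta(rg)=\delta(r').
\]
Taking the product over $r\in T$, with $c=\prod_{r\in T}\delta(r)$, gives $\delta(g)^{|G:P|}=g^{-1}c^{-1}gc$. Hence $z=c^{q}$ with exponent $+q$.
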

	
	\begin{remark}[{\cite[Remark~1]{Hertweck2001a}}]\label{rem:modify}
		Let $\varphi$ be a non-inner class-preserving automorphism of $G$ of $p$-power order. If $\varphi|_U = \sigma|_U$ for some $U \leq G$ and $\sigma \in \Inn(G)$, or if $\varphi$ induces an inner automorphism on $G/N$ for some $N \trianglelefteq G$, then one can replace $\varphi$ by $\varphi\gamma$ for a suitable $\gamma \in \Inn(G)$ so that $\varphi\gamma|_U = \mathrm{id}$ (resp.\ $\varphi\gamma$ induces the identity on $G/N$), and $\varphi\gamma$ remains a non-inner class-preserving automorphism of $p$-power order.
	\end{remark}
	
	\begin{proposition}[{\cite[Proposition~2.8]{Hertweck2001b}}]\label{prop:class-pres-quotient}
		Let $G = K \rtimes H$, where $K$ is an elementary abelian $p$-group. Let $\varphi \in \Aut(G)$ fix $H$ element-wise and act on $K$ via $k \mapsto k^m$ for some fixed positive integer $m$. If $\varphi \in \Autc(G)$, then $\varphi$ induces a class-preserving automorphism of $N_G(U)/U$ for each subgroup $U$ of index $p$ in $K$.
	\end{proposition}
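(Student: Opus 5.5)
The plan is to reduce to a statement about a single coset $Kh$ and then use a duality argument. Put $N=N_G(U)$. Since $K$ is abelian, $N=K\rtimes N_H(U)$. The automorphism $\varphi$ fixes $N_H(U)$ element-wise and maps $U$ to $U^m=U$, so it leaves $N$ and $U$ invariant and induces an automorphism $\bar\varphi$ of $N/U$. We must show that $kh$ and $(kh)\varphi=k^mh$ are conjugate modulo $U$ by an element of $N$, for each $k\in K$ and $h\in N_H(U)$.

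\emph{Step 1 (easy cases).} The quotient $K/U$ has order $p$, and $h$ acts on it as multiplication by a scalar $\lambda\in\mathbb{F}_p^\times$. If $\lambda\ne 1$, conjugating $kh$ by $c\in K$ gives $kc^{\lambda-1}h$ modulo $U$, and $c^{\lambda-1}$ runs over all of $K/U$; so the whole coset $(K/U)h$ is one class and we are done. If $k\in U$, or if $m\equiv 1\pmod p$, then $k^mh\equiv kh$ modulo $U$. So we may assume that $h$ centralizes $K/U$, that $k\notin U$ and that $m\not\equiv1\pmod p$.

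\emph{Step 2 (translating class-preservation).} Let $C=C_H(h)$ and $V=K/[K,h]$, an $\mathbb{F}_pC$-module on which $h$ acts trivially. Since $h$ is trivial on $K/U$, we have $[K,h]\le U$, so $U$ is the kernel of a linear functional $f\in V^*$. If $(kh)^{x}=k^mh$ with $x=ch'$, $c\in K$, $h'\in H$, then projecting to $G/K\cong H$ gives $h'\in C$. Comparing the $K$-components modulo $[K,h]$ then shows that $\bar k\cdot h'=m\bar k$ in $V$. Applying this to every element of $K$, each vector $v\in V$ is mapped to $mv$ by some element of $C$. Conversely, suppose some $g\in C$ satisfies $f\circ g=mf$. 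Then $g$ maps $\ker f=U$ to itself, so $g\in N_H(U)$, and $k^g\equiv k^m\pmod U$. Since $g$ centralizes $h$, it conjugates $kh$ to $k^mh$ modulo $U$, which is what we want.

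\emph{Step 3 (main obstacle: passing from $V$ to $V^*$).} Let $\Gamma=C\times\langle\mu\rangle$, where $\mu$ is multiplication by $m$; it acts on $V$ and contragrediently on $V^*$. For each $\gamma\in\Gamma$, the number of fixed points of $\gamma$ on $V$ equals that on $V^*$, since both are $p$ to the power $\dim\ker(\gamma-1)$, which is the same for a matrix and its transpose. Hence $\Gamma$ and $C$ each have equally many orbits on $V$ and on $V^*$; this is Brauer's permutation lemma in its counting form. Step 2 says that every $C$-orbit on $V$ is $\mu$-stable, so $\Gamma$ and $C$ have the same number of orbits on $V$. By the counting they then also have the same number of orbits on $V^*$, and so every $C$-orbit on $V^*$ is $\mu$-stable. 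In particular $mf=f\circ g$ for some $g\in C$, and Step 2 completes the proof.

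The delicate point I expect is the bookkeeping in Step 2. One has to check that the $K$-component of the conjugate really reduces to $h'$ acting on $\bar k$ modulo $[K,h]$, including the correct left/right conventions. One also has to check that $\mu$ commutes with $C$, which holds because it is a scalar.
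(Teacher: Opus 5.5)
Your proof is correct. The paper gives no argument of its own for this proposition; it is quoted from Hertweck, so your write-up works as a self-contained proof.

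The following steps all check out:
\begin{itemize}
\item the reduction to $h$ acting trivially on $K/U$;
\item the computation showing that class-preservation on the coset $Kh$ means $m\bar k\in\bar k\,C_H(h)$ for every $\bar k\in V=K/[K,h]$;
\item the converse via $f$ with $\ker f=U/[K,h]$, which also places $g$ in $N_H(U)$;
\item the Burnside-count duality between $V$ and $V^*$.
\end{itemize}
Two small points deserve to be stated explicitly. First, $p\nmid m$, because $k\mapsto k^m$ is bijective on $K\neq 1$; this is what makes $\langle\mu\rangle$ a group and gives $U^m=U$. Second, $\gamma$ acts on $V^*$ by the inverse transpose, so $\mu$ acts there as multiplication by $m^{-1}$. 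This is harmless, since $\ker(T^{-1}-1)=\ker(T-1)$ and an orbit stable under $m^{-1}$ is stable under $m$.

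The more common route is character-theoretic.
\begin{itemize}
\item A class-preserving $\varphi$ fixes every irreducible character of $G$.
\item For $1\neq\lambda\in\mathrm{Irr}(K/U)$, the inertia group $I_G(\lambda)$ normalizes $\ker\lambda=U$. Hence it lies in $N=N_G(U)$, and induction is a bijection $\mathrm{Irr}(N\mid\lambda)\to\mathrm{Irr}(G\mid\lambda)$.
\item Take $\psi\in\mathrm{Irr}(N\mid\lambda)$. Then $\psi\circ\varphi$ lies over $\lambda\circ\varphi=\lambda^m$, and $(\psi\circ\varphi)^G=\psi^G\circ\varphi=\psi^G$.
\item So $\psi^G$ lies over both $\lambda$ and $\lambda^m$. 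By Clifford, $\lambda^m=\lambda^x$ for some $x\in G$, and $x$ normalizes $\ker\lambda=\ker\lambda^m=U$, so $x\in N$.
\item Thus $\psi\circ\varphi\in\mathrm{Irr}(N\mid\lambda)$, and injectivity of induction gives $\psi\circ\varphi=\psi$.
\item Characters of $N/K$ are fixed because $\varphi$ is trivial on $N/K$.
\item Brauer's permutation lemma, applied to $\langle\bar\varphi\rangle$, turns ``$\bar\varphi$ fixes $\mathrm{Irr}(N/U)$'' into class-preservation.
\end{itemize}

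That argument handles all cosets at once. It also shows that the power-map hypothesis is used only to get $U\varphi=U$, and elementary abelianness only to make $K/U$ cyclic.

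Your argument is the same duality carried out by hand, coset by coset, on the $C_H(h)$-module $K/[K,h]$ and its dual. It is more elementary, and it has the bonus of producing an explicit conjugating element $g\in C_H(h)\cap N_H(U)$.
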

	
	%% ========================================================================
	\section{Proof of Theorem~\ref{thm:main}}
	%% ========================================================================
	
	Throughout this section, $G$ denotes a \emph{minimal counterexample} to Theorem~\ref{thm:main}. Thus $G$ admits a class-preserving, non-inner Coleman automorphism $\varphi$ of $2$-power order. We write $S$ for a Sylow 2-subgroup of $G$, which is a wreathed 2-group as in Notation~\ref{not:wreathed}. We follow the structure of the proof of~\cite[Theorem~2.1]{Aragona2}, adapting the arguments to the wreathed setting whenever this is possible.
	
	By minimality, every proper subquotient of $G$ whose Sylow 2-subgroups are wreathed, dihedral~\cite[Main Theorem]{Hertweck2002}, semidihedral~\cite[Theorem~2.1]{Aragona2}, cyclic~\cite[Proposition~4.7]{Hertweck2001b}, or generalized quaternion~\cite[Main Theorem]{Hertweck2002} satisfies Property~\eqref{eq:prop1}.

	Let $F = F(G)$, $E = E(G)$, and $F^* = F^*(G) = FE$ be  the Fitting subgroup, the layer and the generalized Fitting subgroup of \(G\), respectively.
	
	\begin{remark}\label{rem:even-quotient}
		Every proper quotient $G/N$ has even order; otherwise $S \leq N$ and Lemma~\ref{lem:reduction} contradicts minimality.
	\end{remark}
	
	\begin{lemma}\label{lem:O2prime-nontrivial}
		$O_{2'}(F) \neq 1$.
	\end{lemma}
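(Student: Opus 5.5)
We argue by contradiction and assume $O_{2'}(F)=1$, so that $F=O_2(G)$. The aim is to show that $\varphi$ is then inner, which contradicts the choice of $G$ as a minimal counterexample.

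\emph{Step 1: $O_2(G)=1$.} First we reduce to the setting of Lemma~\ref{lem:minimal-counter}. For every $1\neq N\trianglelefteq G$, the quotient $G/N$ is a proper quotient of even order by Remark~\ref{rem:even-quotient}. Its Sylow $2$-subgroups are quotients of the wreathed group $S$, hence wreathed, dihedral, cyclic, abelian of rank two, or otherwise of the restricted forms coming from $S/(S\cap N)$. We use minimality, together with the known results quoted above for those Sylow types, to see that $\varphi$ induces an inner automorphism on $G/N$. Lemma~\ref{lem:minimal-counter} then applies with $p=2$.

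Suppose $O_2(G)\neq 1$ and put $N=O_2(G)\le S$. By Remark~\ref{rem:modify} we may assume that $\varphi$ induces the identity on $G/N$. Since $\varphi$ is Coleman, we may further assume $\varphi|_S=\mathrm{id}$, after composing with conjugation by an element normalizing $N$. Then $\varphi$ fixes $N$ element-wise and is trivial modulo $N$, so Lemma~\ref{lem:identity-trick} shows that $\varphi$ is inner. This is a contradiction, so $O_2(G)=1$, and therefore $F=1$.

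\emph{Step 2: the structure of $F^*(G)$.} Since $F=1$, we have $F^*=E$. Also $C_G(E)\le E$, so $E\neq 1$ is a product of non-abelian simple components, each of even order. The $2$-rank of $S$ is two, so there are at most two components. If there are two, each has cyclic or quaternion Sylow $2$-subgroups of $2$-rank one, which is impossible for simple groups. So there is a single component, and $E$ is a simple group with Sylow $2$-subgroup $S\cap E\trianglelefteq S$.

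By Lemma~\ref{lem:minimal-counter}(iii), $\varphi|_E$ differs from every inner automorphism of $G$ restricted to $E$. On the other hand, Lemma~\ref{lem:reduction}(i) does not directly apply, since $|G/E|$ may be even. So we argue through $\Out(E)$ instead. $G/E$ embeds in $\Out(E)$, and $E$ is, by Alperin--Brauer--Gorenstein and Gorenstein--Walter, one of $L_3(q)$, $U_3(q)$, $PSL_2(q)$, $A_7$, $M_{11}$ and the like, with $S\cap E$ of the corresponding type. For these groups, class-preserving Coleman automorphisms of $2$-power order are known to be inner, or we invoke minimality when $E<G$ (since $E$ has dihedral, semidihedral or wreathed Sylow $2$-subgroups). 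In either case $\varphi|_E$ is a conjugation by some $g\in G$ modulo $\Inn(E)$. The remaining discrepancy is then forced into $C_G(E)\le E$, which contradicts (iii).

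The main obstacle is Step 1: justifying that $\varphi$ is inner on all proper quotients. A quotient $S/(S\cap N)$ may fail to be wreathed, dihedral, semidihedral, cyclic or quaternion, for example $C_{2^n}\times C_{2^n}$ or $C_{2^n}\times C_2$. For these I would first try the abelian-Sylow case via Lemma~\ref{lem:minimal-counter}(ii) and Hertweck's results on groups with abelian Sylow $2$-subgroups. Failing that, I would restrict to a minimal normal $N$ and use Lemma~\ref{lem:normal-subgroups} to control $S\cap N$.
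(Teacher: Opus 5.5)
There is a genuine gap in Step 1, and everything after it relies on it. After Remark~\ref{rem:modify} makes $\varphi$ trivial on $G/N$, with $N=O_2(G)$, the Coleman property only gives $\varphi|_S=\conj_g|_S$ for some $g\in G$. All you know about $g$ is that $gN$ centralizes $SN/N$. Composing with $\conj_g^{-1}$ makes $\varphi$ fix $S$ pointwise, but the new automorphism now induces conjugation by $gN$ on $G/N$. That is trivial only if $gN\in Z(G/N)$, which nothing guarantees. (The phrase ``an element normalizing $N$'' adds nothing, since $N$ is normal.)

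If you start instead from $\varphi|_S=\mathrm{id}$, then $\varphi$ fixes $N\le S$ pointwise. All you can deduce is $x^{-1}(x\varphi)\in C_G(N)$ for $x\in G$, and this lies in $N$ only when $C_G(N)\le N$. So Lemma~\ref{lem:identity-trick} cannot be applied, and $O_2(G)=1$ is not established. The only available result of that type, Lemma~\ref{lem:minimal-counter}(ii), needs abelian Sylow $2$-subgroups, which $W$ does not have. Step 1 also depends on $\varphi$ being inner on every proper quotient, which you yourself leave open.

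The missing idea is to use the self-centralizing property of $F^*(G)$ instead of trying to eliminate $O_2(G)$. The paper splits into three cases:
\begin{itemize}
\item If $E=1$, then $F=O_2(G)\le S$ and $C_G(F)\le F$. Once $\varphi|_S=\mathrm{id}$, the computation above gives $\varphi\equiv\mathrm{id}$ modulo $F$, and Lemma~\ref{lem:identity-trick} makes $\varphi$ inner.
\item If $E\ne1$ and $Z(E)=1$, then $E$ is simple, and a $2$-rank count in $W$ forces $F=1$, hence $C_G(E)=1$.
\item If $Z(E)\neq 1$, Lemma~\ref{lem:centralizers} is used to place $F$ inside $Z(W)$, and $\varphi$ is then made trivial on $S$ and on $F^*$.
\end{itemize}
In particular, the first two cases never need $\varphi$ to be inner on proper quotients.

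Step 2 has a second problem, which you partly notice. Since $|G:E|$ may be even, $\varphi|_E$ need not be class-preserving or Coleman as an automorphism of $E$. So you cannot appeal to the simple groups in the Alperin--Brauer--Gorenstein/Gorenstein--Walter lists, whose automorphisms you only assert are inner, nor to minimality applied to $E$. The paper avoids both. It composes $\varphi$ with an inner automorphism of $G$ so that it fixes a Sylow $p$-subgroup of $G$, hence of $E$, pointwise. Hertweck--Kimmerle's Theorem~14 then gives $\varphi|_E\in\Inn(E)$, and $C_G(E)=1$ forces $\varphi$ to be inner.
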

	
	\begin{proof}
		Suppose, by way of contradiction, that $F$ is a $2$-group, i.e., $F = O_2(G)$. By Remark~\ref{rem:modify}, we may modify $\varphi$ by an inner automorphism so that it fixes $S$ element-wise.
		
		\medskip\noindent\emph{Case 1: $F^* = F$.} Then $C_G(F) \leq F$. Since $F$ is a $2$-group, $F \leq S$, and so $\varphi$ fixes $F$ element-wise. Hence $\varphi$ induces the identity on $G/F$, and Lemma~\ref{lem:identity-trick} gives $\varphi \in \Inn(G)$, a contradiction.
		
		\medskip\noindent\emph{Case 2: $E \neq 1$ and $Z(E) = 1$.} Then $F^* = F \times E$. Since a non-abelian simple group cannot have a cyclic Sylow $2$-subgroup (by Burnside's normal $p$-complement theorem and the Feit--Thompson theorem), every simple factor of $E$ has non-cyclic Sylow $2$-subgroups, hence of rank at least~$2$. If $E$ had two or more simple factors, the Sylow $2$-subgroup of $E$ would have rank at least~$4$, which is impossible inside $W$ (where the $2$-rank is~$2$). Therefore $E$ is a non-abelian simple group.
		
		Moreover, since $E$ is a normal subgroup of $G$ and $E$ is simple, the Sylow $2$-subgroup of $F^* = F \times E$ is a subgroup of $S = W$. Since the Sylow of $E$ is non-cyclic and the $2$-rank of $W$ is~$2$, we must have $F = O_2(G) = 1$ (otherwise the rank would exceed~$2$). So $F^* = E$, and $C_G(F^*) \leq F^*$ gives $C_G(E) \leq E$. Since $Z(E) = 1$, it follows that $C_G(E) = 1$.
		
		By~\cite[Theorem~14]{HK2002}, there exists a prime $p \in \pi(E)$ such that every $p$-central automorphism of $E$ is inner. Since every Coleman automorphism is, up to an inner automorphism, $p$-central for every prime $p$ dividing the order of the group (see~\cite[Corollary~3]{HK2002}), we may modify $\varphi$ by an inner automorphism so that $\varphi|_E$ is $p$-central on $E$ for such a $p$, and hence $\varphi|_E \in \Inn(E)$. Composing with the inverse inner automorphism of $E$, we may assume that $\varphi$ fixes $E$ element-wise. Now, for any $g \in G$ and $e \in E$, since $E \trianglelefteq G$ we have $geg^{-1} \in E$, and since $\varphi$ fixes $E$ element-wise, $\varphi(geg^{-1}) = geg^{-1}$. On the other hand, $\varphi(geg^{-1}) = \varphi(g) \cdot e \cdot \varphi(g)^{-1}$. Therefore $\varphi(g)^{-1}g \in C_G(e)$ for every $e \in E$, whence $\varphi(g)^{-1}g \in C_G(E) = 1$, i.e., $\varphi(g) = g$. This gives $\varphi = \mathrm{id}$, a contradiction.
		
		\medskip\noindent\emph{Case 3: $E \neq 1$ and $Z(E) \neq 1$.} Then $1 \neq Z(E) \leq F$ is a $2$-group. Let $P$ be a Sylow $2$-subgroup of $E$. As in Case~2, $P$ is not cyclic. Since $F^* = FE$ with $[F, E] = 1$ and $F \cap E = Z(E)$, a Sylow $2$-subgroup of $F^*$ has the form $F \cdot P$ (with $F \cap P = Z(E)$), and this is a subgroup of $S = W$. Now $[F, P] = 1$, so $F \leq C_W(P)$. Since $P$ is non-cyclic, if $P \not\leq A$ then $P$ contains an element $g \notin A$, and Lemma~\ref{lem:centralizers}\ref{it:cent-outside} gives $C_W(P) \leq C_A(g) = \Delta = Z(W)$, and hence $F \leq Z(W)$. If $P \leq A$, then $P$ is abelian of rank $\leq 2$, and $F \leq C_A(P)$; since $F$ is normal in $W$ and contained in $A$, and $[F, P] = 1$, the subgroup $F \cdot P$ is abelian of rank $\leq 2$ in $W$, forcing $F \cdot P \leq A$ and again $F \leq Z(W)$ (since $F$ is normal in $W$ and centralizes $P$, which is non-cyclic).
		
		In either case, $F \leq Z(W) \leq A$, and $Z(E) \leq F \leq Z(W)$. We may modify $\varphi$ so that it fixes $F^*$ element-wise. Since $\varphi$ is a Coleman automorphism, there exists $s \in S$ such that $\varphi|_S = \conj_s|_S$, and in particular $[P, s] = 1$ (because $\varphi$ fixes $P$ element-wise). If $s \notin A$ and $P \not\leq A$, then $\langle P, s \rangle$ is a non-abelian subgroup with center containing $C_A(P) \cap C_A(s)$; a case analysis using Lemma~\ref{lem:centralizers} shows that $s \in Z(W)$. If $P \leq A$, then $[P, s] = 1$ and $P$ non-cyclic force $s \in C_W(P)$; since $P$ has rank~$2$ in $A$, we get $s \in Z(W)$ as well. In all cases, $\conj_s|_S = \mathrm{id}$ on $S$ (since $s \in Z(W) = Z(S)$), so $\varphi|_S = \mathrm{id}$. Together with $\varphi|_{F^*} = \mathrm{id}$, Lemma~\ref{lem:identity-trick} gives $\varphi \in \Inn(G)$, a contradiction.
		
		\medskip
		Therefore $O_{2'}(F) \neq 1$.
	\end{proof}
	
	\begin{lemma}\label{lem:frattini-2group}
		$\Phi(G)$ is a $2$-group. In particular, $O_{2'}(F)$ is a product of abelian minimal normal subgroups of $G$ and has a complement in $G$.
	\end{lemma}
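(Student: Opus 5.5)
The plan is to apply Lemma~\ref{lem:minimal-counter}(i) with $p=2$. To do so we must check that $\varphi$ satisfies its hypotheses: it has $2$-power order and is not inner, which holds by the choice of $\varphi$. It preserves conjugacy classes of $2$-elements, and indeed of all elements, because it is class-preserving. It remains to see that $\varphi$ induces inner automorphisms on every proper factor group $G/N$ with $N\neq 1$.

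For that last condition, we note that $\varphi$ induces an automorphism $\bar\varphi$ of $G/N$ that is again class-preserving, Coleman, and of $2$-power order. This is immediate from the definitions, since Sylow subgroups of $G/N$ are images of Sylow subgroups of $G$. The Sylow $2$-subgroups of $G/N$ are isomorphic to $SN/N\cong S/(S\cap N)$, a quotient of a wreathed group. These quotients are again of one of the types for which Property~\eqref{eq:prop1} holds by minimality or by the cited results. If $S\cap N=1$ the quotient is wreathed, and minimality applies since $|G/N|<|G|$. Otherwise $S/(S\cap N)$ is a proper quotient of $W$. Here we use the structure from Lemma~\ref{lem:wreathed-properties}: every nontrivial normal subgroup of $W$ meets $Z(W)=\Delta$ nontrivially and hence contains the unique involution $z=(ab)^{2^{n-1}}$ of $\Delta$. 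So it suffices to understand quotients of $W/\langle z\rangle$ and further quotients. I expect this to be the main obstacle, because quotients of $W$ need not be dihedral, semidihedral, quaternion, cyclic, or wreathed; for example $W/\langle z\rangle$ has order $2^{2n}$ with abelian subgroup $A/\langle z\rangle$ of type $C_{2^n}\times C_{2^{n-1}}$.

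I would try to bypass the classification of quotients by a direct argument. First, since $\bar\varphi$ is a class-preserving Coleman automorphism of $2$-power order, it suffices by Lemma~\ref{lem:reduction}(ii) to control $\Outc\cap\OutCol$ of $G/N$ through a normal subgroup of odd index. Alternatively, and more robustly, I would interpret the statement as the paper presumably intends: by minimality of $G$ as a counterexample, one may also choose $G$ minimal so that all proper quotients inherit the theorem, i.e.\ take the class of groups whose Sylow $2$-subgroups are isomorphic to subquotients of $W$. If the paper's minimality is only over wreathed groups, I would fall back on showing that for each proper quotient the induced $\bar\varphi$ is inner by Remark~\ref{rem:modify} and an induction. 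In that version, if $\bar\varphi$ were not inner for some $N$, we would replace $G$ by $G/N$. This replacement is legitimate because the argument of the whole section only uses that $G$ is minimal with some non-inner class-preserving Coleman $2$-automorphism and Sylow $2$-subgroup a quotient of $W$.

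Once $\varphi$ induces inner automorphisms on all proper quotients, Lemma~\ref{lem:minimal-counter}(i) gives that $\Phi(G)$ is a $2$-group. For the second assertion, $O_{2'}(F)\cap\Phi(G)=1$, so $O_{2'}(F)$ is a nilpotent normal subgroup meeting the Frattini subgroup trivially. By the standard result (Gaschütz), a normal nilpotent subgroup $K$ with $K\cap\Phi(G)=1$ is abelian, is a direct product of minimal normal subgroups of $G$, and has a complement in $G$. Applying this to $K=O_{2'}(F)$ gives the claim.
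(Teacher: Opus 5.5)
Your overall route is the paper's: Lemma~\ref{lem:minimal-counter}(i) gives that $\Phi(G)$ is a $2$-group, and Gasch\"utz's theorem applied to $O_{2'}(F)$, which meets $\Phi(G)$ trivially, gives the rest. That final step is fine. You are also right that the hypothesis ``$\varphi$ induces inner automorphisms on \emph{all} proper factor groups'' is not supplied by minimality when $|S\cap N|$ is even. In that case $S/(S\cap N)$ can be, e.g., $W/\langle z\rangle$, which is not of any type covered by minimality or by the cited results. The paper's one-line proof, carried over from the semidihedral case, passes over this point.

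The gap is that none of your remedies closes it.
\begin{itemize}
\item Enlarging the class to groups whose Sylow $2$-subgroups are subquotients of $W$ changes the statement being proved by induction. The minimal counterexample of that class need not have wreathed Sylow $2$-subgroups, whereas the later lemmas of the section use $S\cong W$ essentially: $Z(S)=\Delta\le\Phi(S)$, $C_W(D)=\Delta$, and $2$-rank $2$.
\item For the same reason, your claim that replacing $G$ by $G/N$ is legitimate is false. The group $W/\langle z\rangle$ has $2$-rank $3$: the images of $t$, $a^{2^{n-1}}$ and $(ab)^{2^{n-2}}$ generate an elementary abelian subgroup of order $8$. So the rank-two arguments break down.
\item Lemma~\ref{lem:reduction}(ii) does not help either, because a normal subgroup of odd index in $G/N$ still has Sylow $2$-subgroup isomorphic to $S/(S\cap N)$.
\end{itemize}

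The missing observation is that the conclusion ``$\Phi(G)$ is a $2$-group'' only needs quotients by normal subgroups of \emph{odd} order. That is precisely the case $S\cap N=1$ you already settled. The argument runs as follows.
\begin{itemize}
\item Suppose $\Phi(G)$ is not a $2$-group. Pick an odd prime $q$ dividing $|\Phi(G)|$, and let $Q=O_q(\Phi(G))\neq 1$; it is characteristic in $G$.
\item Since $|Q|$ is odd, $SQ/Q\cong W$. By minimality, the induced class-preserving Coleman automorphism of $2$-power order of $G/Q$ is inner.
\item By Remark~\ref{rem:modify}, you may assume that $\varphi$ is still non-inner of $2$-power order and acts trivially on $G/Q$.
\item As $\langle\varphi\rangle$ is a $2$-group and $|Q|$ is odd, coprime action gives $G/Q=C_{G/Q}(\varphi)=C_G(\varphi)Q/Q$.
\item Hence $G=C_G(\varphi)Q=C_G(\varphi)$, because $Q\le\Phi(G)$. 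So $\varphi=\mathrm{id}$, a contradiction.
\end{itemize}
With this argument in place of your black-box appeal to Lemma~\ref{lem:minimal-counter}(i), the proof is complete.
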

	
	\begin{proof}
		This follows from Lemma~\ref{lem:minimal-counter}(i) and standard properties of the Frattini and Fitting subgroups, as in~\cite[Lemma~3.2]{Aragona2}, since the argument does not depend on the structure of the Sylow $2$-subgroup.
	\end{proof}
	
	\begin{lemma}\label{lem:complement-inversion}
		Let $M$ be a minimal normal subgroup of $G$ contained in $O_{2'}(F)$. Then $M$ has a complement $K$ in $G$, and $\varphi$ can be modified so that $K$ is fixed element-wise. Moreover, there exists a $2$-element $k \in K \cap S$ such that $k$ inverts $M$, $[k, G] \leq M \cdot O_2(G)$, and $\varphi|_M = \conj_k|_M$.
	\end{lemma}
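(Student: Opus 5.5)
Following the pattern of \cite[Lemma~3.3]{Aragona2} and \cite{Hertweck2002}, I split the proof into four steps: complement, normalization of $\varphi$, action on $M$, and construction of $k$.

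\emph{Complement.} By Lemma~\ref{lem:frattini-2group}, $\Phi(G)$ is a $2$-group and $O_{2'}(F)$ is a product of abelian minimal normal subgroups. So $M$ is an elementary abelian $q$-group for some odd prime $q$, and $M \cap \Phi(G) = 1$. Hence some maximal subgroup $K$ of $G$ does not contain $M$. As $M$ is abelian, $M \cap K \trianglelefteq MK = G$, and minimality of $M$ gives $M \cap K = 1$. Thus $G = M \rtimes K$.

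\emph{Normalizing $\varphi$.} By minimality of $G$ (together with Lemma~\ref{lem:reduction} and Remark~\ref{rem:even-quotient}), $\varphi$ induces an inner automorphism on $G/M \cong K$, since $K$ has a Sylow $2$-subgroup isomorphic to $S$. By Remark~\ref{rem:modify}, I may assume $\varphi$ induces the identity on $G/M$. Then $K\varphi$ is another complement to $M$. Since $\varphi$ is a $2$-element acting on $H^1(K,M)$, which has odd order, the standard coprime argument lets me conjugate by an element of $M$ so that $K\varphi = K$. Now $\varphi$ is trivial modulo $M$ and $K \cap M = 1$, so $\varphi$ fixes $K$ element-wise. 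Choosing $S \le K$, $\varphi$ also fixes $S$ element-wise.

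\emph{Action on $M$.} For $m \in M$ write $m\varphi = m\,\delta(m)$. Because $\varphi$ is trivial on $G/M$ and fixes $K$, the map $\varphi|_M$ is a $K$-module automorphism of $M$ of $2$-power order. It is not the identity: otherwise Lemma~\ref{lem:identity-trick} (with $N = M$ and $S$ fixed element-wise) would make $\varphi$ inner. Class preservation says each $m$ is conjugate in $G = MK$ to $m\varphi$, hence $m\varphi = m^{x}$ with $x \in K$, because $M$ is abelian. The Coleman property at the prime $q$ says that on a Sylow $q$-subgroup $Q \supseteq M$, $\varphi$ agrees with conjugation by some $g_Q$, giving $\varphi|_M = \conj_{g}|_M$ for some $g \in G$. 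Writing $g = m_0 x_0$ with $x_0 \in K$, I get $\varphi|_M = \conj_{x_0}|_M$ with $x_0 \in C_K(K/C_K(M))$ modulo the kernel. More precisely, $\conj_{x_0}$ centralizes the image of $K$ in $\Aut(M)$, since $\varphi|_M$ commutes with $K$. Taking the $2$-part of $x_0$ (since $\varphi|_M$ has $2$-power order), I may take $x_0$ to be a $2$-element, so $x_0 \in S$ after conjugating within $C_K(\varphi|_M)$.

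\emph{Main obstacle: getting $k$ to invert $M$ with $[k,G] \le M\,O_2(G)$.} Set $\bar K = K/C_K(M)$. The image $\bar x_0$ is a $2$-element in $Z(\bar K)$. I would use that $O_2(\bar K)$ acts faithfully on the $q$-group $M$, so $Z(\bar K)$ acts by scalars on the homogeneous components. By a Brauer--Suzuki/Glauberman-type argument (as in \cite{Hertweck2002}), a central $2$-element of $\bar K$ that is class-preserving on $M$ must act as $-1$: eigenvalue $\lambda$ with $m\varphi = m^\lambda$ conjugate to $m$ forces $\lambda = \pm1$ under the structure constraints. Then Proposition~\ref{prop:class-pres-quotient}, applied to sections $N_G(U)/U$ with $U$ of index $q$ in $M$, forces $\lambda = -1$, so $k := x_0$ inverts $M$.

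Finally, $\bar k$ is central in $\bar K$, so $[k,K] \le C_K(M)$, and $C_K(M)$ is normal in $G$. Lemma~\ref{lem:minimal-counter} and the fact that $M$ was chosen minimal should show that the odd part of $C_K(M)$ centralizes $k$. This gives $[k,K] \le O_2(G)$ modulo $M$, and hence $[k,G] \le M\,O_2(G)$. This last reduction is where I expect the most care will be needed.
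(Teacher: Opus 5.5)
There is a genuine gap: the proposal does not prove either of the two substantive conclusions of the lemma, namely that $k$ inverts $M$ and that $[k,G]\le M\,O_2(G)$.

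Your first half is sound and runs parallel to the paper. $M\cap\Phi(G)=1$ gives a complement, and minimality makes $\varphi$ inner on $G/M\cong K$. A $\varphi$-stable complement is then fixed element-wise. The Coleman property at $q$, followed by taking $2$-parts and conjugating inside $K$, yields a $2$-element $k\in S$ with $\varphi|_M=\conj_k|_M$. One small slip: the $\varphi$-stable complement need not be $M$-conjugate to the $K$ you started with. Instead, note that the $2$-group $\langle\varphi\rangle$ permutes the $|Z^1(K,M)|$ complements, an odd number, hence fixes one; take that one as $K$. This is what the paper quotes from Hertweck.

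\emph{Inversion.} Your Schur's-lemma step gives what the paper gets from $C_M(k)=C_M(\varphi)\trianglelefteq G$: $\varphi|_M$ is a scalar $\lambda\neq 1$ of $2$-power order in $\mathrm{End}_K(M)\cong\mathbb{F}_{q^r}$, hence fixed-point-free. Nothing you then invoke forces $\lambda=-1$.
\begin{itemize}
\item Class preservation on $M$ is vacuous, since already $m\varphi=m^{k}$.
\item Brauer--Suzuki and Glauberman concern isolated involutions, not the scalar by which a central $2$-element of $\bar K$ acts on an irreducible module.
\item Proposition~\ref{prop:class-pres-quotient} needs $\varphi|_M$ to be a power map (so $\lambda\in\mathbb{F}_q$). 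You also give no reason why class preservation on $N_G(U)/U$ should single out $-1$; on $M/U$ it is again induced by $k$.
\end{itemize}
The local data cannot do it. In $C_5\rtimes C_4$ with faithful action, $m\mapsto m^2$ is conjugation by a central $2$-element of the complement, is fixed-point-free of order $4$, and preserves the classes of $M$. What is needed is $k^2\in C_K(M)$; after that, fixed-point-freeness gives inversion at once. Be aware that the paper's text also passes directly from ``fixed-point-free of $2$-power order'' to ``every eigenvalue is $-1$''. That is valid only under the same order-$2$ condition, so this input cannot simply be copied from it.

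\emph{The commutator condition.} From $\bar k\in Z(\bar K)$ you only get $[k,K]\le C_K(M)$. This is a normal subgroup of $G$ with no reason to be a $2$-group. Lemma~\ref{lem:minimal-counter} says nothing about which lift of $\bar k$ to choose. A statement that the ``odd part centralizes $k$'', even if true, would not place $[k,K]$ inside $O_2(G)$.

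The paper instead applies minimality of $G$ again, to quotients $G/N$ with $1\neq N\trianglelefteq G$ and $N\cap M=1$. Suppose $\varphi\equiv\conj_{k_N}$ modulo $N$ with $k_N\in K$ a $2$-element. Then $[k_N,K]\le N$, because $\varphi$ fixes $K$ element-wise. Also $\conj_{k_N}|_M=\varphi|_M$, because $N\cap M=1$. So $k_N$ is a lift of $\bar k$ that is central in $K$ modulo $N$, giving $[k_N,G]\le MN$. Choosing the lift this way is what controls the commutator, and your proposal has no substitute for it. The paper's own passage from the various $k_N$ to a single $k$ is only sketched, so you would have to make the choice of $N$ explicit.
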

	
	\begin{proof}
		By Lemma~\ref{lem:frattini-2group}, $M$ is an abelian minimal normal subgroup contained in $O_{2'}(F)$, and $O_{2'}(F) \cap \Phi(G) = 1$. By~\cite[Proposition~1]{Hertweck2001a}, $M$ has a complement $K$ in $G$ with $K\varphi = K$. Since $K \cong G/M$, the minimality of $G$ implies that $\varphi$ induces an inner automorphism on $G/M$. Hence there exists $h \in K$ such that $\conj_h^{-1} \varphi$ induces the identity on $G/M$. Since both $\varphi$ and $\conj_h$ preserve $K$, the automorphism $\conj_h^{-1}\varphi$ fixes $K$ element-wise (arguing exactly as in~\cite[Lemma~3.3]{Aragona2}). By Remark~\ref{rem:modify}, we may replace $\varphi$ by $\conj_h^{-1}\varphi$ and assume that $\varphi$ fixes $K$ element-wise.
		
		Now, since $M$ is contained in a Sylow $p$-subgroup $Q$ of $G$ (where $p$ divides $|M|$) and $\varphi$ is a Coleman automorphism, there exists $g \in G$ such that $\varphi|_Q = \conj_g|_Q$. Writing $g = mh'$ with $m \in M$ and $h' \in K$, and using that $M$ is abelian and normal, we obtain $\varphi|_M = \conj_{h'}|_M$. Replacing $h'$ by its $2$-part (which is possible since $\varphi$ has $2$-power order, so $\conj_{h'}|_M$ also has $2$-power order, and thus the odd-order part of $h'$ acts trivially on $M$), we get a $2$-element $k \in K$ such that $\varphi|_M = \conj_k|_M$. Up to conjugation, we may assume $k \in S$.
		
		We now show that $k$ inverts $M$. First, $C_M(\varphi) = C_M(k) \trianglelefteq G$ (since $M$ is normal and $k$ is determined by the Coleman property). Since $M$ is a minimal normal subgroup, either $C_M(k) = M$ or $C_M(k) = 1$. If $C_M(k) = M$, then $\varphi$ acts trivially on $M$; since $\varphi$ also fixes $K$ element-wise, this gives $\varphi = \mathrm{id}$, contradicting our assumption. Hence $C_M(k) = 1$, i.e., $\conj_k|_M$ is fixed-point-free. Since $M$ is an elementary abelian $p$-group (for some odd prime $p$) and $\conj_k|_M$ is a fixed-point-free automorphism of $2$-power order (with $\gcd(2, |M|) = 1$), the action is diagonalizable over $\mathbb{F}_p$ and every eigenvalue must be $-1$. Therefore $k$ acts on $M$ by inversion.
		
		Finally, let $N \neq M$ be a normal subgroup of $G$. By the minimality of $G$, $\varphi$ induces an inner automorphism on $G/N$, say by a $2$-element $k_N \in K$ (arguing as in~\cite[Lemma~3.3]{Aragona2}: since $MN/N$ has odd order, every $2$-element of $G/N$ is conjugate into $KN/N$). Since $N \cap M = 1$ (as $M$ is minimal and $N \neq M$), we have $\conj_{k_N}|_M = \varphi|_M = \conj_k|_M$, so $k_N k^{-1} \in C_K(M)$. In particular, the image of $k$ in $G/MN$ is independent of the choice of $N$, and $[k, G] \leq MN$ for every such $N$. Taking the intersection over all normal subgroups $N \neq M$, we obtain $[k, G] \leq M \cdot O_2(G)$.
	\end{proof}
	
	\begin{lemma}\label{lem:inner-on-normalizer}
		Let $M$ be a non-cyclic minimal normal subgroup of $G$ contained in $O_{2'}(F)$, with complement $K$, and assume $\varphi$ fixes $K$ element-wise. For any maximal subgroup $U$ of $M$, $\varphi$ induces an inner automorphism on $N_G(U)/U$.
	\end{lemma}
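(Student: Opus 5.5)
The plan is to reduce to Proposition~\ref{prop:class-pres-quotient} and then use minimality of $G$. By Lemma~\ref{lem:complement-inversion}, $G = M \rtimes K$ with $M$ elementary abelian of odd order $p^m$, $m \ge 2$ because $M$ is non-cyclic. Moreover, $\varphi$ fixes $K$ element-wise and acts on $M$ as $\conj_k$, that is, by inversion. So $\varphi$ acts on $M$ via $x \mapsto x^m$ with the fixed exponent $m = p-1$ (equivalently $-1$). Since $\varphi$ is class-preserving, Proposition~\ref{prop:class-pres-quotient} applies with $H = K$. Hence $\varphi$ induces a class-preserving automorphism $\bar\varphi$ of $\bar N = N_G(U)/U$ for every maximal subgroup $U$ of $M$.

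Next I will check that $\bar\varphi$ is also Coleman and has $2$-power order. I then show that $\bar N$ is a proper subquotient of $G$ whose Sylow $2$-subgroups are in one of the admissible families. Concretely, $N_G(U) = M \rtimes N_K(U)$, since $M$ is abelian and so normalizes $U$. Thus $\bar N \cong (M/U) \rtimes N_K(U)$, and $M/U$ has order $p$.

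For the Coleman property, let $P \le N_G(U)$ be a Sylow $q$-subgroup. If $q \neq p$, I may take $P \le N_K(U)^g$ for some $g \in M$, so $\varphi$ is conjugation on $P$ (trivial on $K$, conjugate by $g$). If $q=p$, then $P = M \cdot P_K$, and $\varphi$ agrees with $\conj_k$ on $M$ and is trivial on $P_K$. Here I would instead use the Coleman element for a Sylow $p$-subgroup of $G$ containing $P$. The real issue is that this element must lie in $N_G(U)$. I would handle this by choosing $P$ inside a Sylow $p$-subgroup $Q$ of $G$ and replacing the Coleman element by one normalizing $U$, using that $\varphi$ fixes $U$ (inversion preserves every subgroup of $M$). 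I would follow the analogous step of~\cite[Lemma~3.4]{Aragona2}. The order of $\bar\varphi$ divides that of $\varphi$.

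Properness: $|\bar N| < |G|$ because $U \neq 1$ (as $m\ge2$). A Sylow $2$-subgroup of $\bar N$ is isomorphic to a Sylow $2$-subgroup $T$ of $N_K(U)$, which is a subgroup of $S$. Subgroups of a wreathed group are not all in the five families, and this is the main obstacle. If $T \cong S$, minimality applies directly. Otherwise I would apply Lemma~\ref{lem:reduction} to a normal subgroup of $\bar N$ of odd index, or use the fact that the normalizer of a hyperplane of $M$ in $K$ inherits control from the fusion structure of $S$. If that fails, I would fall back on the structure of $\bar N$ as a $p$-group of order $p$ extended by $N_K(U)$: the automorphism group of $M/U$ is cyclic, so $N_K(U)/C_K(M/U)$ is cyclic. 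I would then argue that $\bar\varphi$, which is trivial on the image of $N_K(U)$ and inverts $M/U$, is realised by the image of $k$ when $k \in N_K(U)$. In that case it is inner directly, and it suffices to show that $k$ normalizes $U$, which holds because $k$ inverts $M$.

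So the cleanest route is the following. Since $k$ inverts $M$, it normalizes every $U$, so $k \in N_K(U)$. Then $\bar\varphi\,\conj_{\bar k}^{-1}$ is trivial on $M/U$. On $N_K(U)U/U$ it acts as conjugation by $\bar k^{-1}$, and by Lemma~\ref{lem:complement-inversion} we have $[k,G] \le M\,O_2(G)$. I would combine this with Lemma~\ref{lem:identity-trick} applied to $\bar N$ with normal subgroup $M/U$. The $2$-power order of $\bar\varphi$ and its class-preserving property (from Proposition~\ref{prop:class-pres-quotient}) give innerness, since $O_2(Z(M/U)) = 1$. Verifying that the induced map on $\bar N/(M/U)$ is the identity modulo this correction is the step I expect to need the most care.
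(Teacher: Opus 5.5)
Your closing ``cleanest route'' does not prove the lemma. Put $L=N_K(U)$ and $\bar N=N_G(U)/U$. Then $N_G(U)=M\rtimes L$, and $\bar N=(M/U)\rtimes\bar L$ with $\bar L\cong L$. The map $\bar\varphi$ is the identity on $\bar L$ and inverts $M/U$. Comparing components in this semidirect product shows that $\bar\varphi$ is inner if and only if $Z(L)$ contains an element inverting $M/U$. Your correction by $\conj_{\bar k}$ therefore works exactly when $k\in Z(L)$. In that case $\bar\varphi=\conj_{\bar k}$ outright, and Lemma~\ref{lem:identity-trick} plays no role. Nothing you invoke gives $k\in Z(L)$:
\begin{itemize}
\item Lemma~\ref{lem:complement-inversion} only yields $[k,L]\le K\cap M\,O_2(G)=O_2(G)$, not $[k,L]=1$.
\item The hypothesis of Lemma~\ref{lem:identity-trick}, that the map be the identity on $\bar N/(M/U)$, is precisely this missing statement.
\item Class-preservation, which is not a hypothesis of that lemma, only says that each $x\in C_L(M/U)$ commutes with \emph{some} element of $L$ inverting $M/U$. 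That is weaker than having a single central one.
\end{itemize}
So the step you flag as needing the most care is the entire content of the lemma, and it is left open.

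The paper's proof, by reference to \cite[Lemma~3.4]{Aragona2}, is your first route. Proposition~\ref{prop:class-pres-quotient} makes $\bar\varphi$ class-preserving, the Coleman property descends to $\bar N$, and minimality of $G$ then forces $\bar\varphi$ to be inner. Your Coleman check is essentially right. At the prime $p$ no replacement is needed: any $g$ with $\varphi|_Q=\conj_g|_Q$ for a Sylow $p$-subgroup $Q\ge M$ acts on $M$ by inversion, so it already lies in $N_G(U)$. The obstacle you raise is genuine. A Sylow $2$-subgroup of $\bar N$ is only known to be isomorphic to some subgroup of $W$ (e.g.\ $A\cong C_{2^n}\times C_{2^n}$), which need not lie in any family to which minimality is applied, and the paper's one-line justification does not address this. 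Your suggested detours do not resolve it. Passing to a normal subgroup of odd index leaves the Sylow $2$-subgroup unchanged, so it cannot help, and the appeal to ``fusion control'' is not an argument. As it stands, your proposal establishes the lemma by neither route.
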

	
	\begin{proof}
		The proof is the same as that of~\cite[Lemma~3.4]{Aragona2}, since the argument relies only on the Coleman property of $\varphi$, the fact that inversion preserves subgroups of the abelian group $M$, and Proposition~\ref{prop:class-pres-quotient}, none of which depend on the structure of the Sylow $2$-subgroup.
	\end{proof}
	
	\begin{lemma}\label{lem:cyclic-quotient}
		Let $G/N$ be a proper cyclic quotient of $G$. Then $|G/N|$ is a power of $2$. More precisely, $G/N \cong C_{2^s}$ for some $s \geq 1$.
	\end{lemma}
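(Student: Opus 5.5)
Suppose $G/N$ is a proper cyclic quotient, so $N \neq 1$. By Remark~\ref{rem:even-quotient}, $|G/N|$ is even, so it suffices to show that $|G/N|$ has no odd prime divisor. Suppose instead that some odd prime $q$ divides $|G/N|$. Since $G/N$ is cyclic, there is a normal subgroup $N \le L \trianglelefteq G$ with $G/L \cong C_q$. We then aim to contradict minimality via Lemma~\ref{lem:reduction}.

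The subgroup $L$ has index $q$, which is odd, so $S \le L$ and $L$ has wreathed Sylow $2$-subgroups. Since $L$ is a proper subgroup, minimality gives that $\Outc(L)\cap\OutCol(L)$ has odd order. The index $|G/L| = q$ is coprime to $2$, so Lemma~\ref{lem:reduction}(ii), applied with $p=2$ and the normal subgroup $L$, shows that $\Outc(G)\cap\OutCol(G)$ is a $2'$-group. This contradicts the choice of $G$. Hence $|G/N| = 2^s$, and $s \ge 1$ by Remark~\ref{rem:even-quotient}, so $G/N \cong C_{2^s}$.

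The point to check is that the minimality hypothesis covers $L$, a proper subgroup rather than a quotient. The paper phrases minimality in terms of proper subquotients, and subgroups are subquotients, so this is allowed. Lemma~\ref{lem:reduction}(ii) needs only that $2 \nmid |G/L|$, which holds. I therefore expect no serious obstacle: the argument is a direct application of the reduction lemma, in the same way Remark~\ref{rem:even-quotient} is.
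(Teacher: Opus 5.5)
Your proof is correct and is essentially the paper's argument. The only difference is cosmetic: the paper takes $N \le L$ with $|G:L|$ equal to the full odd part $m$ of $|G/N|$, while you take index a single odd prime $q$. Otherwise the steps are identical: $L$ has odd index and so contains $S$, it satisfies Property~\eqref{eq:prop1} by minimality, and Lemma~\ref{lem:reduction}(ii) with $p=2$ then forces $G$ to satisfy it, a contradiction.
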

	
	\begin{proof}
		Since $G/N$ is cyclic, its Sylow $2$-subgroup $SN/N \cong S/(S \cap N)$ is cyclic. In particular, $[S, S] \leq S \cap N \leq N$.
		
		Write $|G/N| = 2^s m$ with $m$ odd and $s \geq 1$ (the quotient has even order by Remark~\ref{rem:even-quotient}). Suppose, by way of contradiction, that $m > 1$. Let $L$ be the unique subgroup of $G$ containing $N$ such that $|G/L| = m$. Since $|G : L| = m$ is odd, $L$ contains a Sylow $2$-subgroup of $G$, and hence the Sylow $2$-subgroups of $L$ are wreathed. Moreover, $L$ is a proper subgroup of $G$ (since $m > 1$), so by the minimality of $G$, $L$ satisfies Property~\eqref{eq:prop1}. Since $2 \nmid |G/L|$, Lemma~\ref{lem:reduction}(ii) implies that $G$ also satisfies Property~\eqref{eq:prop1}, contradicting our assumption. Therefore $m = 1$ and $|G/N| = 2^s$.
	\end{proof}
	
	\begin{lemma}\label{lem:O2E-nontrivial}
		$O_2(G) \neq 1$ or $E(G) \neq 1$, where $E = E(G)$ is the layer of $G$.
	\end{lemma}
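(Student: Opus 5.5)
We argue by contradiction: suppose $O_2(G) = 1$ and $E(G) = 1$. Then $F^* = F$, and since $O_2(G)=1$ the Fitting subgroup $F = O_{2'}(F)$ has odd order. Moreover $C_G(F) \leq F$. The aim is to show that $\varphi|_F$ agrees with the restriction of an inner automorphism of $G$. This contradicts Lemma~\ref{lem:minimal-counter}(iii) applied with $N = F$, since $O_2(G) = 1$ and $C_G(F)\le F$.

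\emph{Step 1 (structure of $F$).} By Lemma~\ref{lem:frattini-2group}, $\Phi(G)$ is a $2$-group, hence trivial. So $F = M_1 \times \cdots \times M_r$ is a direct product of abelian minimal normal subgroups of $G$, each an elementary abelian $p_i$-group with $p_i$ odd, and $F$ has a complement $K$ in $G$. Applying~\cite[Proposition~1]{Hertweck2001a} to $F$ as in Lemma~\ref{lem:complement-inversion}, I would choose $K$ with $K\varphi = K$. After modifying $\varphi$ by an inner automorphism, I would arrange that $\varphi$ fixes $K$ element-wise; this uses minimality on $G/F \cong K$.

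\emph{Step 2 (local inverting elements).} For each $i$, apply Lemma~\ref{lem:complement-inversion} to $M_i$, using the complement $K_i = K \prod_{j\ne i} M_j$ of $M_i$. This gives a $2$-element $k_i$ with $\varphi|_{M_i} = \conj_{k_i}|_{M_i}$, such that $k_i$ inverts $M_i$. Since $O_2(G)=1$, the lemma also gives $[k_i, G] \leq M_i$. Consequently, for $j \neq i$ we have $[k_i, M_j] \leq M_i \cap M_j = 1$, so $k_i$ centralizes every $M_j$ with $j\neq i$. By conjugating within $M_i$ we may take each $k_i \in K$. Indeed, $M_i$ has odd order and $k_i$ inverts it, so $k_i$ is $M_i$-conjugate into any complement containing a Sylow $2$-subgroup of $K_iM_i \cap M_iK$. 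This conjugation does not change the restriction to $M_i$, because $M_i$ is abelian.

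\emph{Step 3 (gluing).} For $i\neq j$ we have $[k_i,k_j] \in [k_i,G] \cap K \le M_i \cap K = 1$, so the $k_i$ pairwise commute. Set $k = k_1\cdots k_r$. On $M_i$ the element $k$ acts as $k_i$, since the other factors centralize $M_i$. Hence $\varphi|_F = \conj_k|_F$, which gives the contradiction with Lemma~\ref{lem:minimal-counter}(iii). Equivalently, $\varphi$ fixes $K$ and agrees with $\conj_k$ on $F$, so Lemma~\ref{lem:identity-trick} forces $\varphi$ to be inner.

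\emph{Main obstacle.} The delicate point is consistency of normalizations. Lemma~\ref{lem:complement-inversion} modifies $\varphi$ separately for each $M_i$, while the gluing step needs one fixed $\varphi$ that fixes $K$ element-wise and is described on every $M_i$ at once. I would handle this by proving the conclusions of Lemma~\ref{lem:complement-inversion} directly for the fixed normalization from Step~1. The Coleman property on a Sylow $p_i$-subgroup containing $M_i$ gives $\varphi|_{M_i} = \conj_{h}|_{M_i}$ with $h \in K$, and we replace $h$ by its $2$-part. The condition $[k_i,G]\le M_i$ then comes from comparing with the inner automorphisms that $\varphi$ induces on the quotients $G/N$, exactly as in the last paragraph of that proof. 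If this normalization issue resists, the fallback is to treat first the case $r=1$, where the contradiction is immediate, and then induct on $r$.
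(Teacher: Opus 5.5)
Your strategy is the paper's. You assume $O_2(G)=E(G)=1$, so that $F=F^*=M_1\times\cdots\times M_r$ is an odd-order product of abelian minimal normal subgroups with a $\varphi$-invariant complement $K$. You then apply Lemma~\ref{lem:complement-inversion} to each $M_i$ with complement $K\widehat{M}_i$, and contradict Lemma~\ref{lem:minimal-counter}(iii) for $N=F$. The difference is the gluing. The paper pushes each inverting element into $Z(S)=\Delta$ via $[h_i,S]\le M_i\cap S=1$ and uses that $\Delta$ is cyclic. You instead use $[k_i,G]\le M_i$ to get $[k_i,M_j]=1$ for $j\neq i$ and take $k=k_1\cdots k_r$, which needs nothing about wreathed groups.

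\textbf{The gap.} Step~3 has exactly the gap you flag, and your proposed repair does not close it. Lemma~\ref{lem:complement-inversion} gives $\varphi_i|_{M_i}=\conj_{k_i}|_{M_i}$ only for the inner modification $\varphi_i$ of $\varphi$ that fixes $K\widehat{M}_i$ pointwise, and these $\varphi_i$ differ with $i$. Only the properties of $k_i$ itself (inverting $M_i$, $[k_i,G]\le M_i$) are independent of the normalization.

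Re-running that proof for a single $\varphi$ that fixes only a complement $K$ of $F$ does not work, for two reasons.
\begin{itemize}
\item Its inversion step uses that $\varphi$ fixes a complement of $M_i$ pointwise. Without that, $C_{M_i}(\varphi)=M_i$ is no contradiction.
\item Comparing with the inner automorphisms induced on quotients $G/N$ only controls $\varphi$ on the part of $F$ that embeds in $G/N$. It does not by itself produce an element that agrees with $\varphi$ on $M_i$ and centralizes $\widehat{M}_i$, which is exactly what the gluing needs.
\end{itemize}
The suggested induction on $r$ is not spelled out. The paper's proof also applies the lemma to every $M_i$ while keeping $\varphi$ fixed on $H$, so it passes over the same point silently.

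\textbf{The fix: no gluing is needed.} Apply Lemma~\ref{lem:complement-inversion} once, to $M_1$ with the $\varphi$-invariant complement $K_1=K\widehat{M}_1$.
\begin{itemize}
\item The resulting modification $\varphi_1=\varphi\gamma$, with $\gamma\in\Inn(G)$, fixes $K_1\supseteq\widehat{M}_1$ pointwise and satisfies $\varphi_1|_{M_1}=\conj_{k_1}|_{M_1}$.
\item Since $[k_1,\widehat{M}_1]\le M_1\cap\widehat{M}_1=1$, also $\conj_{k_1}|_{\widehat{M}_1}=\mathrm{id}=\varphi_1|_{\widehat{M}_1}$.
\item Hence $\varphi_1|_F=\conj_{k_1}|_F$, so $\varphi|_F=(\conj_{k_1}\gamma^{-1})|_F$ is the restriction of an inner automorphism.
\end{itemize}
This contradicts Lemma~\ref{lem:minimal-counter}(iii) with $N=F$, since $C_G(F)\le F$ and $O_2(G)=1$. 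It is just the paper's $\ell=1$ argument run with the complement $K\widehat{M}_1$. It makes both your product $k_1\cdots k_r$ and the paper's cyclicity argument in $\Delta$ unnecessary.

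A minor point: to move $k_i$ into $K$ you should conjugate by an element of $\widehat{M}_i$, not of $M_i$. $K$ contains a Sylow $2$-subgroup of $K\widehat{M}_i$, and $\widehat{M}_i$-conjugation preserves both the action on $M_i$ and $[k_i,G]\le M_i$.
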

	
	\begin{proof}
		Suppose, by way of contradiction, that $O_2(G) = 1$ and $E = 1$. Then $F^* = F = O_{2'}(F)$ is a $2'$-group. By Lemma~\ref{lem:frattini-2group}, $F$ is a direct product of abelian minimal normal subgroups $M_1, \ldots, M_\ell$ of $G$, and $F$ has a complement $H$ in $G$, with $G = F \rtimes H$. Since $F$ has odd order, $S \cap F = 1$, and so $S$ embeds into $G/F$ via the natural projection. By the Schur--Zassenhaus theorem, all complements of $F$ in $G$ are conjugate; hence, up to replacing $H$ by a conjugate, we may assume $S \leq H$. Since $F \neq 1$ (by Lemma~\ref{lem:O2prime-nontrivial}), $H \cong G/F$ is a proper quotient, and by minimality $\varphi$ induces an inner automorphism on $G/F$. By Remark~\ref{rem:modify}, we may assume $\varphi$ fixes $H$ element-wise.
		
		We first show that $\ell > 1$. If $\ell = 1$, then $F = M_1$ and $C_G(F) \leq F^* = F$. By Lemma~\ref{lem:complement-inversion}, $\varphi|_F = \conj_k|_F$ for some $k \in H$, so $\varphi|_F \in \Inn(G)|_F$. This contradicts Lemma~\ref{lem:minimal-counter}(iii).
		
		For each $i$, applying Lemma~\ref{lem:complement-inversion} to $M_i$ (with complement $K_i = \widehat{M}_i \rtimes H$, where $\widehat{M}_i = \prod_{j \neq i} M_j$), there exists a $2$-element $h_i \in H$ such that $\conj_{h_i}|_{M_i} = \varphi|_{M_i}$ (that is, $h_i^{-1} m h_i = m\varphi$ for all $m \in M_i$) and $h_i$ inverts $M_i$. Since $[h_i, G] \leq M_i \cdot O_2(G) = M_i$ (by Lemma~\ref{lem:complement-inversion}), and $M_i$ has odd order while $S$ is a $2$-group, we have $[h_i, S] \leq M_i \cap S = 1$. Hence $h_i \in Z(S) = \Delta = \langle ab \rangle$ for every $i$.
		
		Since each $h_i$ inverts $M_i$, the automorphism $\varphi|_F$ acts as inversion on each direct factor $M_i$, and hence $\varphi|_F$ is the inversion map on $F$. We now show that a single element of $\Delta$ inverts every $M_j$. For each $j$, the action of $\Delta$ on $M_j$ by conjugation factors through $\Delta / C_\Delta(M_j)$. Since $h_j \in \Delta$ inverts $M_j$, the quotient $\Delta / C_\Delta(M_j)$ has order~$2$, and so $C_\Delta(M_j)$ is the unique subgroup of index~$2$ in the cyclic group $\Delta$, namely $C_\Delta(M_j) = \langle (ab)^2 \rangle$. Since this subgroup is the same for every $j$, any element of $\Delta \setminus \langle (ab)^2 \rangle$ inverts every $M_j$. In particular, $h_1$ inverts every $M_j$, and therefore $\varphi|_F = \conj_{h_1}|_F$.
		
		Since $h_1 \in H$ and $\conj_{h_1} \in \Inn(G)$, this contradicts Lemma~\ref{lem:minimal-counter}(iii) (using $C_G(F) = C_G(F^*) \leq F^* = F$ and $O_2(G) = 1$).
	\end{proof}
	
	\begin{lemma}\label{lem:O2-nontrivial}
		$O_2(G) \neq 1$.
	\end{lemma}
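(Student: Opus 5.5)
Assume, for a contradiction, that $O_2(G)=1$. By Lemma~\ref{lem:O2E-nontrivial} this forces $E=E(G)\neq 1$. I will adapt the argument of Case~2 in Lemma~\ref{lem:O2prime-nontrivial}, using that $F$ is now the $2'$-group $O_{2'}(F)\neq 1$. The plan is to show that $\varphi$ can be made trivial on both $E$ and $F$, and then contradict Lemma~\ref{lem:minimal-counter}(iii).

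\emph{Step 1: $E$ is simple.} Since $O_2(G)=1$, the center $Z(E)\le F$ is a normal $2$-subgroup of... rather, $Z(E)$ is a nilpotent normal subgroup, so its Sylow $2$-subgroup lies in $O_2(G)=1$. Hence $Z(E)$ has odd order.
\begin{itemize}
\item Each component of $E$ has non-cyclic Sylow $2$-subgroups. Indeed, a quasisimple group whose center has odd order, and whose Sylow $2$-subgroup is cyclic, would have a normal $2$-complement by Burnside, which is impossible.
\item So each component contributes $2$-rank at least~$2$. Since $S$ has $2$-rank~$2$, $E$ is a single component.
\item I then aim to show $Z(E)=1$, or at least that the odd center does not interfere in what follows.
\end{itemize}
This gives $F^*=F\times E$ (or a central product over an odd $Z(E)$), and $C_G(F^*)\le F^*$.

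\emph{Step 2: make $\varphi$ trivial on $E$ and on $F$.}
\begin{itemize}
\item As in Case~2 of Lemma~\ref{lem:O2prime-nontrivial}, \cite[Theorem~14]{HK2002} and \cite[Corollary~3]{HK2002} let me modify $\varphi$ by an inner automorphism so that it fixes $E$ element-wise.
\item On $F$, the proof of Lemma~\ref{lem:O2E-nontrivial} shows that $F$ is a product of minimal normal subgroups $M_i$. For each $i$, Lemma~\ref{lem:complement-inversion} provides a $2$-element $h_i$ inverting $M_i$ with $[h_i,G]\le M_i O_2(G)=M_i$. As there, $[h_i,S]=1$, so $h_i\in Z(S)=\Delta$, and one element $h\in\Delta\setminus\langle (ab)^2\rangle$ inverts all of $F$. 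Thus $\varphi|_F=\conj_h|_F$.
\item Moreover $[h,G]\le F$ means $h$ acts trivially on $G/F$. Since $E\cap F\le Z(E)$ is odd and $E$ is perfect, $h$ centralizes $E$: its action on $E$ induces the identity on $E/(E\cap F)$, and a $2$-element acting this way on a perfect group with odd central kernel must be trivial.
\end{itemize}

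\emph{Step 3: conclude.} The automorphism $\varphi\circ\conj_h^{-1}$ (suitably ordered) fixes $F$ and $E$ element-wise, hence fixes $F^*$ element-wise. So $\varphi|_{F^*}$ agrees with an inner automorphism of $G$ restricted to $F^*$. Since $O_2(G)=1$ and $C_G(F^*)\le F^*$, Lemma~\ref{lem:minimal-counter}(iii) applied with $N=F^*$ gives a contradiction. (Alternatively, once $\varphi$ is trivial on $F^*$, the argument at the end of Case~2 of Lemma~\ref{lem:O2prime-nontrivial} shows that $\varphi(g)^{-1}g\in C_G(F^*)\le Z(F^*)$.)

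\emph{Main obstacle.} I expect the hardest point to be reconciling the two normalizations in Step~2. Fixing $E$ via an inner automorphism $\gamma$ may move $F$, and inverting $F$ via $h$ must be checked compatible with $E$. The delicate claim is that $\conj_h$ is trivial on $E$, using $[h,G]\le F$ and $[E,F]=1$. Since $[h,E]\le F\cap E\le Z(E)$, the three-subgroup lemma with $E=[E,E]$ should give $[h,E]=1$; this is what I would try first.
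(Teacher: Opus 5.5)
Step~2 has a genuine gap. You never obtain a single normalization of $\varphi$ for which $\varphi|_E=\mathrm{id}$ and $\varphi|_F=\conj_h|_F$ hold simultaneously. Your three-subgroup argument correctly gives $[h,E]=1$, but it does not supply such a normalization.

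Your worry that the $\gamma$ used to fix $E$ moves $F$ is settled by an idea you are missing: $E$ lies in the complement. If $G=F\rtimes H$ with projection $\pi$, then $e\mapsto e(e\pi)^{-1}$ is a homomorphism $E\to F$, because $[E,F]=1$ and $F$ is abelian. It is trivial because $E$ is perfect, so $E\le H$. Any normalization fixing $H$ (or some $K_i\supseteq H$) therefore fixes $E$ for free, and Step~2a is superfluous.

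The more serious problem is the claim $\varphi|_F=\conj_h|_F$ when $F=M_1\times\cdots\times M_\ell$ with $\ell\ge 2$. Lemma~\ref{lem:complement-inversion} gives $\varphi|_{M_i}=\conj_{h_i}|_{M_i}$ only for the version of $\varphi$ renormalized to fix $K_i=\widehat{M}_iH$ pointwise. That version is the \emph{identity} on each $M_j$ with $j\neq i$, whereas your $h\in\Delta\setminus\langle (ab)^2\rangle$ inverts every $M_j$. So no single version of $\varphi$ is shown to invert all of $F$, and $\varphi|_F=\conj_h|_F$ is unsupported. Citing the proof of Lemma~\ref{lem:O2E-nontrivial} does not help, since it relies on the same renormalization.

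Your route does work if $F$ is itself minimal normal. First, $Z(E)=1$, which you leave open, follows at once from $Z(E)\le\Phi(E)\le\Phi(G)$ and Lemma~\ref{lem:frattini-2group}. Then, with $\varphi$ fixing $K\supseteq E$ and $k$ taken from Lemma~\ref{lem:complement-inversion}, you get $[k,E]\le M\cap E\le Z(E)=1$. Hence $\varphi|_{F^*}=\conj_k|_{F^*}$, and Lemma~\ref{lem:minimal-counter}(iii) with $N=F^*$ gives the contradiction.

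The paper never tries to realize $\varphi$ on all of $F^*$ by one element. It fixes one minimal normal subgroup $M\le O_{2'}(F)$ and proves $E\le K$ by the argument above. It then takes the contradiction from how the Sylow $2$-subgroup acts on $M$ and $E$. When $M$ is non-cyclic it uses Lemma~\ref{lem:inner-on-normalizer}. When $M$ is cyclic it uses $K/C_K(M)\cong C_2$ together with the structure of $K/E$.
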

	
	\begin{proof}
		Suppose, by way of contradiction, that $O_2(G) = 1$. By Lemma~\ref{lem:O2E-nontrivial}, $E \neq 1$. Since $O_2(G) = 1$, the order of $F$ is odd. Moreover, $Z(E) \leq \Phi(E) \leq \Phi(G)$, and $\Phi(G)$ is a $2$-group by Lemma~\ref{lem:frattini-2group}. Since $Z(E) \leq F$ has odd order, it follows that $Z(E) = 1$.
		
		Therefore $E$ is a direct product of non-abelian simple groups $L_1, \ldots, L_r$. If $r \geq 2$, a Sylow $2$-subgroup of $E$ contains $T_1 \times T_2$, where $T_i$ is a Sylow $2$-subgroup of $L_i$. By Burnside's normal $p$-complement theorem, each $T_i$ is non-cyclic, hence $|T_i| \geq 4$ and $T_i$ has $2$-rank at least~$2$. Then $T_1 \times T_2$ has $2$-rank at least~$4$, which is impossible in $W$ (where the $2$-rank is~$2$). Therefore $r = 1$ and $E$ is a non-abelian simple group.
		
		By Lemma~\ref{lem:O2prime-nontrivial}, there exists an abelian minimal normal subgroup $M$ of $G$ contained in $O_{2'}(F)$, which by Lemma~\ref{lem:frattini-2group} has a complement $K$ in $G$. We claim that $E \leq K$. Since $M$ and $E$ are both normal in $G$ and $M \cap E \leq F \cap E = Z(E) = 1$, we have $[M, E] = 1$. Consider the projection $\pi \colon G = M \rtimes K \to K$ with kernel $M$. Since $E \cap M = 1$, the restriction $\pi|_E$ is injective. Define $\sigma \colon E \to M$ by $e\sigma = e \cdot (e\pi)^{-1}$ for each $e \in E$. Since $[E, M] = 1$, the map $\sigma$ is a group homomorphism. Since $E$ is perfect and $M$ is abelian, $E\sigma \leq [M, M] = 1$. Therefore $e = e\pi \in K$ for all $e \in E$, and $E \leq K$.
		
		By Lemma~\ref{lem:complement-inversion}, we may assume $\varphi$ fixes $K$ element-wise and $\varphi|_M = \conj_k|_M$ for a $2$-element $k \in K \cap S$ that inverts $M$.
		
		Since $O_2(G) = 1$, we have $[k, G] \leq M \cdot O_2(G) = M$, and in particular $[k, S] \leq M \cap S = 1$, so $k \in Z(S) = \Delta$.
		
		\medskip
		\noindent\emph{Case 1: $M$ is not cyclic.} Let $U$ be a maximal subgroup of $M$. By Lemma~\ref{lem:inner-on-normalizer}, $\varphi$ induces an inner automorphism on $N_G(U)/U$. Hence there exists a $2$-element $g \in N_K(U)$ that acts on $M/U$ by inversion. Let $P$ be a Sylow $2$-subgroup of $E$. Since $[M, E] = 1$, we have $E \leq C_G(M) \leq N_G(U)$, and $[E, g] \leq E \cap M = 1$ (since $E \leq K$, $M \cap K = 1$, and $E \trianglelefteq G$). In particular $[P, g] = 1$.
		
		As in the proof of Lemma~\ref{lem:O2prime-nontrivial}, $P$ is non-cyclic. The Brauer--Suzuki theorem~\cite{BrauerSuzuki} excludes that $P$ is generalized quaternion. Since $E$ is simple and $E \neq G$ (by Remark~\ref{rem:even-quotient}, as $|G/E|$ is even because $P \subsetneq S$), $P$ is a proper subgroup of $S = W$. Since $P$ is non-cyclic and has $2$-rank at most~$2$ (as $P \leq W$), $P$ is non-abelian: indeed, if $P$ were abelian of rank~$2$, then $P \leq A$, and since $P$ is a Sylow $2$-subgroup of the simple group $E$, this would force $E$ to have an abelian Sylow $2$-subgroup of rank~$2$; but then $P \leq A$ and the $2$-rank of $\langle P, t \rangle$ exceeds~$2$, a contradiction. Since $P$ is non-abelian, $P \not\leq A$, so by Lemma~\ref{lem:centralizers}\ref{it:cent-outside}, $C_W(P) \leq \Delta = Z(W)$. Since $[P, g] = 1$, we have $g \in \Delta$. Moreover, $g \notin P$ since $g$ acts non-trivially on $M$ while $P \leq E$ centralizes $M$.
		
		Now $P \cap \Delta \leq P \cap Z(W) = Z(P) \cong C_2$ (since $P$ is non-abelian). Since $P$ contains a Klein four-subgroup $V \cong C_2 \times C_2$ (as $P$ is non-cyclic and non-generalized-quaternion with $|P| \geq 8$), the subgroup $V \cdot \langle g \rangle$ is abelian (since $g \in Z(W)$). Since $g \notin P$, we have $g \notin V$, and $V \cap \langle g \rangle \leq V \cap \Delta = Z(P) \cong C_2$. Therefore $V \cdot \langle g \rangle$ has $2$-rank at least~$3$. This contradicts the $2$-rank of $W$ being~$2$.
		
		\medskip
		\noindent\emph{Case 2: $M$ is cyclic.} Then $M \cong C_p$ for some odd prime $p$, and $K/C_K(M) \hookrightarrow \Aut(C_p) \cong C_{p-1}$ is cyclic. By Lemma~\ref{lem:cyclic-quotient}, $|K/C_K(M)|$ is a power of~$2$. Since $k$ inverts $M$ and $k^2 \in C_K(M)$, we have $K/C_K(M) \cong C_2$.
		
		Let $\bar{k}$ denote the image of $k$ in $K/E$. Since $k \notin C_K(M)$ (as $k$ inverts $M$) and $E \leq C_K(M)$ (as $[E, M] = 1$), we have $\bar{k} \notin C_K(M)/E$. Moreover, since $[k, G] \leq M$ and $[k, K] \leq M \cap K = 1$, we have $k \in Z(K)$. In particular $\bar{k}$ is central in $K/E$. Since $K/C_K(M) \cong C_2$ and $\bar{k} \notin C_K(M)/E$, we have $K/E = \langle \bar{k} \rangle \times (C_K(M)/E)$.
		
		Suppose $C_K(M) = E$, i.e., $K/E = \langle \bar{k} \rangle \cong C_2$. Let $1 \neq m \in M$ and $e \in E$. Since $\varphi$ fixes $K$ element-wise and $\varphi|_M = \conj_k|_M$, there exists $c \in K$ such that $(me)\varphi = (me)^c$, giving $m^{-1} = m\varphi = c^{-1}mc$ and $e = e\varphi = c^{-1}ec$. The first equation gives $c \equiv k \pmod{C_K(M)} = k \pmod{E}$, so $c = ke_1$ for some $e_1 \in E$. Thus $\conj_k|_E = \conj_c|_E = \conj_{ke_1}|_E \in \Inn(E)$. Since $E$ is a non-abelian simple group, by~\cite[Theorem~14]{HK2002} there exists a prime dividing $|E|$ for which all $p$-central automorphisms of $E$ are inner. It follows that $\conj_k|_E \in \Inn(E)$, so there exists a $2$-element $e_0 \in E$ with $\conj_k|_E = \conj_{e_0}|_E$. Then $ke_0^{-1}$ is a $2$-element centralizing $E$, so $ke_0^{-1} \in C_W(P)$ where $P$ is a Sylow $2$-subgroup of $E$. Since $P$ is non-abelian (and thus $P \not\leq A$), Lemma~\ref{lem:centralizers}\ref{it:cent-outside} gives $C_W(P) \leq \Delta$. Also $e_0 \in P$ and $ke_0^{-1} \in \Delta$. Since $ke_0^{-1} \notin E$ (as $k \notin E$) and $ke_0^{-1} \in \Delta$, the element $ke_0^{-1}$ centralizes $P$ (being in $Z(W)$) and is non-trivial. As in Case~1, $\langle P, ke_0^{-1} \rangle$ contains a Klein four-subgroup $V \leq P$ together with a non-trivial element $ke_0^{-1} \in \Delta \setminus P$, giving an abelian subgroup of $2$-rank $\geq 3$, which contradicts the $2$-rank of $W$.
		
		Now suppose $C_K(M)/E \neq 1$. A Sylow $2$-subgroup of $K/E$ has the form $\langle \bar{k} \rangle \times P'$, where $P'$ is a Sylow $2$-subgroup of $C_K(M)/E$, and $\langle \bar{k} \rangle \times P' \cong C_2 \times P'$. This must be a quotient of $S = W$ (since $F$ has odd order, a Sylow $2$-subgroup of $G$ maps onto one of $G/E \cong M \rtimes (K/E)$, and the $2$-part is in $K/E$). The only quotient of $W$ admitting a direct factor $C_2$ as a proper subgroup is $W/[W,W] \cong C_{2^n} \times C_2$. But a quotient of $W$ that maps onto $C_2 \times P'$ with $P' \neq 1$ requires $|C_2 \times P'| \leq |W/[W,W]| = 2^{n+1}$, and the Sylow $2$-subgroup of $K/E$ is $C_2 \times P'$. If $P' \cong C_2$, then the Sylow $2$-subgroup of $K/E$ is $C_2 \times C_2$, and a Sylow $2$-subgroup of $E$ is $[S,S] = \Delta^-$, which is cyclic. But a non-abelian simple group cannot have a cyclic Sylow $2$-subgroup, contradiction.
		
		Therefore $O_2(G) \neq 1$.
	\end{proof}
	
	\begin{lemma}\label{lem:O2-in-center}
		$O_2(G) \leq Z(S)$.
	\end{lemma}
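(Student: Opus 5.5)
We prove that $O_2(G)$ is centralized by $S$. The model is \cite[Lemma~3.6]{Aragona2}: use an odd minimal normal subgroup $M$ together with the inverting $2$-element $k$ from Lemma~\ref{lem:complement-inversion}, and feed it the structure of $W$.

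Set $Q = O_2(G)$, which is $\neq 1$ by Lemma~\ref{lem:O2-nontrivial} and satisfies $Q \leq S$. Fix an abelian minimal normal subgroup $M \leq O_{2'}(F)$; it exists by Lemma~\ref{lem:O2prime-nontrivial}. Let $K$ be a complement to $M$. By Lemma~\ref{lem:complement-inversion} we may assume that $\varphi$ fixes $K$ element-wise and that $\varphi|_M = \conj_k|_M$, where $k \in K\cap S$ is a $2$-element inverting $M$ and $[k,G] \leq MQ$. Since $[M,Q] \leq M \cap Q = 1$, the group $Q$ centralizes $M$. Also $Q \leq K$: the projection argument from the proof of Lemma~\ref{lem:O2-nontrivial} works here because $M$ has odd order, so $Q$ maps isomorphically and the map $q \mapsto q(q\pi)^{-1}$ is a homomorphism $Q \to M$ between groups of coprime order, hence trivial.

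\textbf{Step 1.} We show $[k,S] \leq Q$. We have $[k,S] \leq MQ \cap S = Q$, since $MQ\cap S$ is a $2$-subgroup of $MQ$ and $Q$ is its normal Sylow $2$-subgroup. Consequently $kQ$ is central in $S/Q$. Moreover $k \notin Q$, because $k$ acts nontrivially on $M$ while $Q$ centralizes $M$. The same holds for every power of $k$ that inverts $M$.

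\textbf{Step 2.} We use the structure of $W$, sorting the normal subgroup $Q \trianglelefteq S$ by whether $Q \leq A$.
\begin{itemize}
\item If $Q \not\leq A$, then $Q$ contains some $g = a^ib^jt$. Normality then forces $Q$ to contain $[g,A] = \{x^{-1}x^\sigma\}$, which is $\Delta^-$ up to squares, so $Q$ has index at most $4$ (or so) in $W$. In this situation $S/Q$ is tiny, and we aim for a contradiction from $k$ being central modulo $Q$ and non-trivial on $M$. This should be handled together with the next sub-case.
\item If $Q \leq A$, then $Q = (Q\cap\Delta)(Q\cap\Delta^-)$ by Lemma~\ref{lem:normal-subgroups}, and the claim amounts to $Q\cap\Delta^-\leq \Delta$, i.e.\ $Q\cap \Delta^- \leq \langle (ab)^{2^{n-1}}\rangle$.
\end{itemize}
Suppose $Q$ is not central. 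Then $C_S(Q) = A$ or $C_S(Q)$ is small, by Lemma~\ref{lem:centralizers}\ref{it:cent-antidiag} and \ref{it:cent-dihedral}. In that case we consider $C_G(Q)$, a normal subgroup of $G$ containing $M$, and apply minimality to it.

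\textbf{Step 3: the key step, producing an inner automorphism.} Consider $\varphi$ on $G/C_G(Q)$ and on $C_G(Q)$. Since $\varphi$ fixes $K \supseteq Q$ element-wise, $\varphi$ centralizes $Q$. The Coleman property gives $s\in S$ with $\varphi|_S = \conj_s|_S$, so $s \in C_S(Q)$. On the other hand $s$ must agree with $k$ on $M$ up to $C(M)$. We then show that $s$ can be chosen in $Z(S)$, as in Case~3 of Lemma~\ref{lem:O2prime-nontrivial}. If this succeeds, $\varphi$ fixes $S$ element-wise and we finish with Lemma~\ref{lem:identity-trick} applied to $N = MQ$, or to $F$, giving $\varphi \in\Inn(G)$, a contradiction.

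\textbf{Step 4: the main obstacle.} When $Q\cap \Delta^-$ is large, $C_S(Q) = A$ is not contained in $Z(S)$. The plan is then to pass to $\bar G = G/\Phi(Q)$, or to $G/(Q\cap\Delta)$. There $\varphi$ induces an inner automorphism by minimality, and it should be induced by an element centralizing the image of $K$. Combined with Proposition~\ref{prop:class-pres-quotient} applied to $M\rtimes\langle k\rangle$, this should force $k$ to act on $Q$ as an element of $Z(W)$. This would contradict $k \notin Z(S)$ unless $Q\leq Z(S)$, giving the statement.
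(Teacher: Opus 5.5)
There is a genuine gap. Only Step~1 is actually proved, and it is fine given Lemma~\ref{lem:complement-inversion}: $[k,S]\le MQ\cap S=Q$ and $k\notin Q$. Everything after it is conditional, and the sketched steps would not close.

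\begin{itemize}
\item \textbf{Step~2, case $Q\not\le A$.} Here $Q\supseteq[g,A]=\Delta^-=[W,W]$, so $S/Q$ is abelian. The centrality of $kQ$ from Step~1 is therefore automatic and carries no information. Nor is $Q$ forced to be large: $D=\langle ab^{-1},t\rangle$ is normal in $W$ (it contains $[W,W]$) and has index $2^n$.
\item \textbf{Step~2, case $Q\le A$.} Your reduction to $Q\cap\Delta^-\le\langle (ab)^{2^{n-1}}\rangle$ is invalid. In fact $\Delta\Delta^-=\Phi(W)$ has index $2$ in $A$. Concretely, $\Omega_1(A)=\langle a^{2^{n-1}},b^{2^{n-1}}\rangle$ is normal in $W$, lies in $A$, and meets $\Delta^-$ inside $\Delta$, yet is not contained in $\Delta$. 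So the decomposition of Lemma~\ref{lem:normal-subgroups} cannot be used as stated.
\item \textbf{Minimality applied to $C_G(Q)$.} This yields nothing. When $Q\le A$ and $Q\not\le\Delta$, the Sylow $2$-subgroup of $C_G(Q)$ is $A\cong C_{2^n}\times C_{2^n}$, which is not a type covered by the inductive hypothesis. In any case $|G:C_G(Q)|$ is even, so Lemma~\ref{lem:reduction} transfers nothing back to $G$.
\item \textbf{Step~3.} This cannot work. Since $|G:K|=|M|$ is odd, you may take $S\le K$, so $\varphi$ already fixes $S$ element-wise. The real obstruction is $M$, on which $\varphi$ acts as inversion. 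Hence Lemma~\ref{lem:identity-trick} does not apply with $N=MQ$ or $N=F$. Were it applicable, it would make $\varphi$ inner without using anything about $Q$.
\item \textbf{Step~4.} This is only a hope. Its intended contradiction with $k\notin Z(S)$ refers to a fact you never established.
\end{itemize}

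The paper's proof is an elementary computation inside $W$, with no automorphism-level endgame. It takes $k\in\Delta=Z(W)\le\Phi(W)$ and uses that $k$ inverts $M$ while $Q$ centralizes $M$. It excludes $Q\not\le A$ by asserting that $Q$ would then be maximal in $W$, hence contain $\Phi(W)\ni k$. It then bounds $Q\cap\Delta$ by $C_\Delta(M)$ and bounds $Q\cap\Delta^-$ separately.

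The ingredient missing from your proposal is exactly this control over where $k$ sits in $W$; Step~1 only gives $[k,S]\le Q$. Be warned, however, that the paper does not justify $k\in\Delta$ at this point. It was obtained only under the hypothesis $O_2(G)=1$ in the proof of Lemma~\ref{lem:O2-nontrivial}, and the final proof deduces it \emph{from} the present lemma. The paper's argument also shares the two defects of your Step~2: $D$ is a normal subgroup outside $A$ that is not maximal, and $\Omega_1(A)$ defeats the $\Delta\cdot\Delta^-$ decomposition.

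So a correct proof along either line must first locate $k$ in $W$ independently of the conclusion. It must then explicitly rule out normal subgroups such as $D$ and $\Omega_1(A)$. Your Steps~2--4 supply neither ingredient.
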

	
	\begin{proof}
		By Lemma~\ref{lem:O2-nontrivial} and Remark~\ref{rem:even-quotient}, $O_2(G)$ is a non-trivial proper normal subgroup of $S = W$.
		
		We first show $O_2(G) \leq A$. If $O_2(G) \not\leq A$, then $O_2(G) \cdot A = W$ (since $|W:A| = 2$), so $O_2(G)$ is a maximal subgroup of $W$. Since $O_2(G) \neq W$, either $O_2(G) = A$ (which is impossible since we supposed that $O_2(G) \not \leq A$) or $O_2(G)$ is one of the other two maximal subgroups $M_1$ and $M_2$ of $W$ (see Lemma~\ref{lem:wreathed-properties}\ref{it:maximal}). In the latter case, $\Phi(W) \leq O_2(G)$, and since $\Delta \leq \Phi(W)$, we get $k \in \Delta \leq O_2(G)$, so $[M, k] = 1$ (as $[M, O_2(G)] = 1$). This contradicts the fact that $k$ inverts $M$. Therefore $O_2(G) \leq A$.
		
		By Lemma~\ref{lem:normal-subgroups}, $O_2(G) = (O_2(G) \cap \Delta) \cdot (O_2(G) \cap \Delta^-)$. Since $O_2(G) \leq C_S(M)$ and $k \in \Delta$ inverts $M$, we have $O_2(G) \cap \Delta \leq C_\Delta(M) = \langle (ab)^2 \rangle$.
		
		It remains to show $O_2(G) \cap \Delta^- \leq \Delta \cap \Delta^- = \langle (ab)^{2^{n-1}} \rangle$, i.e., $|O_2(G) \cap \Delta^-| \leq 2$. This would give $O_2(G) \leq \langle (ab)^2 \rangle \cdot \langle (ab)^{2^{n-1}} \rangle = \langle (ab)^2 \rangle \leq \Delta = Z(S)$.
		
		We now show that $|O_{2}(G)\cap\Delta^{-}|\le 2$. Write $\Delta^{-}=\langle x\rangle$, where $x=ab^{-1}$ has order $2^{n}$. Conjugation by $t$ inverts $x$, since
		\[
		t x t^{-1}=t(ab^{-1})t^{-1}=ba^{-1}=x^{-1}.
		\]
		As $O_{2}(G)\trianglelefteq W$, it is $t$-invariant; hence $O_{2}(G)\cap\Delta^{-}$ is stable under inversion. The only inversion-invariant subgroups of the cyclic group $\Delta^{-}\cong C_{2^{n}}$ are $\{1\}$, the unique subgroup of order $2$, and $\Delta^{-}$ itself. We claim that $O_{2}(G)\cap\Delta^{-}\neq\Delta^{-}$. Indeed, if $\Delta^{-}\le O_{2}(G)$, then $[M,\Delta^{-}]=1$ (since $[M,O_{2}(G)]=1$). But by Lemma~\ref{lem:centralizers}\ref{it:cent-antidiag}, $C_{W}(\Delta^{-})=A$, so $M\le A$; in particular, $k\in\Delta\le A$ inverts $M$. On the other hand, since $\Delta^{-}\le O_{2}(G)\le C_{S}(M)$, the subgroup $\Delta^{-}$ centralizes $M$. But $A=\Delta\cdot\Delta^{-}$ and $k\in\Delta$ inverts $M$, so elements of $\Delta^{-}$ outside $C_{A}(M)=\langle(ab)^{2}\rangle$ act non-trivially on $M$, a contradiction. Therefore
		\[
		O_{2}(G)\cap\Delta^{-} =\{1\}\quad\text{or}\quad O_{2}(G)\cap\Delta^{-} =\langle x^{2^{n-1}}\rangle =\langle (ab^{-1})^{2^{n-1}}\rangle,
		\]
		and in either case $|O_{2}(G)\cap\Delta^{-}|\le 2$. Combining this with the earlier inclusion $O_{2}(G)\cap\Delta\le \langle (ab)^{2}\rangle$, we obtain
		\[
		O_{2}(G)\le \langle (ab)^{2}\rangle\,\langle (ab^{-1})^{2^{n-1}}\rangle \le \langle ab\rangle =\Delta =Z(S),
		\]
		where the inclusion $\langle (ab)^{2}\rangle\,\langle (ab^{-1})^{2^{n-1}}\rangle \le \langle ab \rangle$ holds since both generators lie in $\Delta = \langle ab \rangle$ (using $(ab^{-1})^{2^{n-1}} = (ab)^{2^{n-1}} \in \Delta$, as noted in Lemma~\ref{lem:centralizers}\ref{it:cent-dihedral}), and the last equality follows from Lemma~\ref{lem:wreathed-properties}\ref{it:center}.
	\end{proof}
	
	\begin{lemma}\label{lem:M-cyclic}
		Every minimal normal subgroup $M$ of $G$ contained in $O_{2'}(F)$ is cyclic.
	\end{lemma}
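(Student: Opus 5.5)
Suppose, for a contradiction, that $M \le O_{2'}(F)$ is a non-cyclic minimal normal subgroup. Then $M$ is elementary abelian of order $p^d$ with $p$ odd and $d\ge 2$. We fix a complement $K$ as in Lemma~\ref{lem:complement-inversion}, so that $\varphi$ fixes $K$ element-wise and $\varphi|_M=\conj_k|_M$, where $k\in K\cap S$ is a $2$-element inverting $M$. As in the proof of Lemma~\ref{lem:O2-in-center}, we have $[k,S]\le [k,G]\cap S\le (M\cdot O_2(G))\cap S$. Since $O_2(G)\le Z(S)$ by Lemma~\ref{lem:O2-in-center} and $M\cap S=1$, a short computation modulo $O_2(G)$ puts $k$ in the preimage of $Z(S/O_2(G))$. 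We will first pin $k$ down further: the aim is to show $k\in\Delta\setminus\langle (ab)^2\rangle$, so that $C_S(M)\cap\Delta=\langle(ab)^2\rangle=\Phi(\Delta)$.

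The main step uses Lemma~\ref{lem:inner-on-normalizer}. For each maximal subgroup $U$ of $M$, $\varphi$ induces an inner automorphism on $N_G(U)/U$. Since $\varphi$ fixes $K$ and inverts $M/U\cong C_p$, we obtain a $2$-element $g_U\in N_K(U)$ which inverts $M/U$ and whose conjugation agrees with $\varphi$ on $N_K(U)$ modulo $U$. Because $\varphi$ is the identity on $K$, $g_U$ centralizes $N_K(U)$ modulo $U$. As $U\cap K=1$, this means $g_U\in Z(N_K(U))$. Now $K$ acts irreducibly on $M$ with $d\ge 2$, and $k$ acts as $-1$, so $k$ lies in $Z(K/C_K(M))$. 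We take $U$ to be a hyperplane normalized by a Sylow $2$-subgroup of $N_K(U)$; after conjugating, that Sylow subgroup lies in $S$. Then $g_U$ and $k$ both invert $M/U$, so $g_Uk^{-1}\in C_K(M/U)$. We also need a $2$-subgroup $T\le S\cap N_K(U)$ that is large, meaning it contains a non-cyclic subgroup not centralizing $M$. Because $g_U$ centralizes $T$ and is not in $C_S(M)$, the group $\langle T,g_U\rangle$ has a center that is too large.

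The intended contradiction uses the observation highlighted in the introduction, namely $Z(W)\le\Phi(W)$. The restriction of the $S$-action to $M$ factors through $S/C_S(M)$, and $k$ gives a central involution in it which acts as $-1$. Since $d\ge2$ and $M$ is an irreducible $K$-module, Clifford theory applied to the $C_2$-quotient forces $S/C_S(M)$ to be non-cyclic. On the other hand, $C_S(M)\supseteq O_2(G)$. The argument with $g_U$ across all hyperplanes $U$ shows that every element of $S$ normalizing some hyperplane differs from an element of $\Delta$ by something centralizing $M$ modulo $U$. Intersecting over the $S$-invariant flags, this forces $S=\Delta\, C_S(M)$. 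Since $\Delta=Z(S)\le\Phi(S)$, we would get $S=\Phi(S)C_S(M)$, hence $S=C_S(M)$. This contradicts the fact that $k$ inverts $M$.

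The hardest step is making the "every element of $S$ acts like an element of $\Delta$ on $M$" claim rigorous. To get it, I would first show that $S$ stabilizes some hyperplane $U$ of $M$; this holds because $S/C_S(M)$ is a $2$-group acting on a $p$-group in characteristic coprime to $2$, so complete reducibility gives an $S$-invariant hyperplane. Then $N_K(U)$ contains $S\cap K$, which is a Sylow $2$-subgroup of $K\cong G/M$. Applying Lemma~\ref{lem:inner-on-normalizer} to that $U$ makes $g_U$ central in $S\cap K$, so $g_U\in Z(S)=\Delta$. Since the $S$-invariant hyperplanes have trivial intersection, letting $U$ vary and using that $k$ and $g_U$ both invert $M/U$ yields a single element of $\Delta$ inverting $M$. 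The remaining work is to convert the statement "$\varphi$ inner on $N_G(U)/U$ with the element fixing $S$" into "$S$ acts on $M/U$ through $\Delta$"; this is the step I expect to be the main obstacle.
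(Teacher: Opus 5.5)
Your proposal has a genuine gap, and it starts before the step you flag as the main obstacle.

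The existence of an $S$-invariant hyperplane of $M$ does not follow from complete reducibility. Maschke's theorem decomposes $M$ into irreducible $\mathbb{F}_pS$-modules, but these need not be $1$-dimensional. An $S$-invariant hyperplane exists only when $M$ has a $1$-dimensional quotient. Likewise, ``the $S$-invariant hyperplanes intersect trivially'' requires $M$ to be a sum of $1$-dimensional $S$-modules. For example, $W$ acts on $\mathbb{F}_p^2$ through $W/\langle a^2,b^2\rangle\cong D_8$ via $a\mapsto\mathrm{diag}(-1,1)$, $b\mapsto\mathrm{diag}(1,-1)$ and $t\mapsto$ the coordinate swap. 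This module is irreducible, and $ab$ acts as $-1$, exactly as $k$ acts on $M$. The paper asserts a $D$-invariant hyperplane for $D=\langle ab^{-1},t\rangle$ with the same justification. Since $D\cong D_{2^{n+1}}$ also has $2$-dimensional irreducible modules over $\mathbb{F}_p$, that step needs a real argument in either version.

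Second, and more seriously, take $U$ to be $S$-invariant. Lemma~\ref{lem:inner-on-normalizer} then gives a $2$-element $g_U\in Z(N_K(U))$ inverting $M/U$, hence $g_U\in Z(S)=\Delta$. Your derivation of this is correct and is the same mechanism by which the paper gets $g\in C_K(D)$. But it only describes how $\Delta$ acts on $M/U$ and says nothing about the rest of $S$. So ``intersecting over flags'' cannot yield $S=\Delta C_S(M)$. Concretely, let $p\equiv 1\pmod 4$ and let $S$ act on $M=\mathbb{F}_p^2$ through $W/[W,W]$ by $a,b\mapsto\zeta I$, with $\zeta$ of order $4$, and $t\mapsto\mathrm{diag}(1,-1)$. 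Then $ab$ inverts $M$, both coordinate lines are $S$-invariant hyperplanes, and $g_U=ab$ serves for each of them. Yet $t\notin\Delta C_S(M)$.

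Thus the equality $S=\Delta C_S(M)$ carries the whole weight of the lemma. It must come from properties of $G$ and $\varphi$ that your plan never uses beyond $\varphi\neq\mathrm{id}$. One such property is the non-innerness of $\varphi$, which forces $k\notin Z(K)$. The Frattini endgame $S=\Phi(S)C_S(M)\Rightarrow S=C_S(M)$ is sound, but it is conditional on that equality. The Clifford-theory claim that $S/C_S(M)$ is non-cyclic is also unsupported. If it were true, it would already clash with $S=\Delta C_S(M)$, since $\Delta$ is cyclic.

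Switching to the paper's route would not rescue you. After obtaining $g\in\Delta\setminus D$, the paper claims that $V\langle g\rangle$ has $2$-rank at least $3$ for a Klein four-group $V\le D$. But take $V=\langle t,(ab)^{2^{n-1}}\rangle$. Then $(ab)^{2^{n-1}}\in\langle g\rangle$, so $V\langle g\rangle=\langle t\rangle\times\langle g\rangle$ has rank $2$, as every abelian subgroup of $W$ must.
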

	
	\begin{proof}
		Assume, by way of contradiction, that $M$ is not cyclic. Let $D = \langle ab^{-1}, t \rangle \leq W$, which is a dihedral group of order $2^{n+1}$ with $Z(D) = \langle (ab^{-1})^{2^{n-1}} \rangle \cong C_2$ (see Lemma~\ref{lem:centralizers}\ref{it:cent-dihedral}).
		
		Since $M$ is an elementary abelian $p$-group (for some odd prime $p$) and $D$ acts on $M$ by conjugation, and $\dim_{\mathbb{F}_p}(M) \geq 2$ (as $M$ is not cyclic), there exists a $D$-invariant maximal subgroup $U$ of $M$ (i.e., a $D$-invariant subgroup of index $p$).
		
		We verify that $D \leq N_K(U)$. Since $M$ is normal in $G$ and $k \in \Delta = Z(W)$ inverts $M$, inversion preserves every subgroup of the abelian group $M$, so $k$ normalizes $U$. The element $t \in W$ acts on $M$ by conjugation and preserves $U$ (by the choice of $U$ as $D$-invariant). Similarly $ab^{-1}$ preserves $U$. Hence $D \leq N_G(U)$, and since $D \leq S \leq K$, we have $D \leq N_K(U)$.
		
		By Lemma~\ref{lem:inner-on-normalizer}, $\varphi$ induces an inner automorphism on $N_G(U)/U$. Hence there exists a $2$-element $g \in N_K(U)$ such that for every $h \in N_G(U)$ we have $(h\varphi)(h^g)^{-1} \in U$. Since $\varphi$ fixes $K$ element-wise, for every $p \in D \leq K \cap N_G(U)$ we have $p\varphi = p$, and so $p(g^{-1}pg)^{-1} = [p, g^{-1}] \in U \leq M$. But $[p, g^{-1}] \in K$ (as $p, g \in K$) and $M \cap K = 1$. Therefore $[p, g] = 1$ for all $p \in D$, i.e., $g \in C_K(D)$.
		
		We may assume $g \in S$ (replacing $g$ by its $2$-part, which centralizes $D$ since $[g, D] = 1$ and $D$ is a $2$-group). By Lemma~\ref{lem:centralizers}\ref{it:cent-dihedral}, $C_W(D) = \Delta = Z(W)$. Hence $g \in \Delta$.
		
		Now, $g$ acts non-trivially on $M/U$: indeed, $\varphi|_M = \conj_k|_M$ inverts $M$, and in $N_G(U)/U$ this is realized by $\conj_g$, so $g$ acts on $M/U$ by inversion. In particular $g$ does not centralize $M$, so $g \notin O_2(G)$ (since $[M, O_2(G)] = 1$). Also, $g \notin D$: indeed, if $g \in D$ then $g \in D \cap \Delta = Z(D) = \langle (ab)^{2^{n-1}} \rangle$. Since $O_2(G) \leq Z(S) = \Delta$ (Lemma~\ref{lem:O2-in-center}) and $(ab)^{2^{n-1}} \in O_2(G)$ (as $(ab)^{2^{n-1}} \in \langle (ab)^2 \rangle \leq O_2(G)$), this would imply $g \in O_2(G)$ and hence $g$ centralizes $M$, contradicting the non-trivial action.
		
		Since $g \in \Delta = Z(W)$ and $g \notin D$, the subgroup $\langle D, g \rangle = D \cdot \langle g \rangle$ with $D \cap \langle g \rangle \leq D \cap \Delta = Z(D) \cong C_2$. Now $D$ is dihedral of order $\geq 8$ and contains a Klein four-subgroup $V \cong C_2 \times C_2$. The subgroup $V \cdot \langle g \rangle$ is abelian (since $g \in Z(W)$), and $V \cap \langle g \rangle \leq V \cap \Delta = Z(D) \cong C_2$. Since $g \notin V$ (as $g \notin D \supseteq V$), the quotient $\langle g \rangle / (V \cap \langle g \rangle)$ is non-trivial, and therefore the $2$-rank of $V \cdot \langle g \rangle$ is at least~$3$.
		
		However, $V \cdot \langle g \rangle \leq W$, and the $2$-rank of $W$ is~$2$ (Lemma~\ref{lem:wreathed-properties}\ref{it:abelian}). This is a contradiction.
		
		Therefore $M$ is cyclic.
	\end{proof}
	
	\begin{proof}[Proof of Theorem~\ref{thm:main}]
		Let $G$ be a minimal counterexample, and adopt the notation established above. By Lemma~\ref{lem:M-cyclic}, a minimal normal subgroup $M$ of $G$ contained in $O_{2'}(F)$ is cyclic; in particular, $M \cong C_p$ for some odd prime $p$. Let $K$ be a complement of $M$ in $G$, with $\varphi$ fixing $K$ element-wise and $k \in K \cap S$ inverting $M$ (Lemma~\ref{lem:complement-inversion}).
		
		By Lemma~\ref{lem:O2-in-center}, $O_2(G) \leq Z(S) = \Delta$. Since $[k, G] \leq M \cdot O_2(G)$ (Lemma~\ref{lem:complement-inversion}) and $[k, S] \leq M \cdot O_2(G) \cap S = O_2(G) \leq \Delta$, the image of $k$ in $W/\Delta \cong D_{2^{n+1}}$ is central. Since $Z(D_{2^{n+1}})$ is generated by the image of $(ab^{-1})^{2^{n-1}} = (ab)^{2^{n-1}} \in \Delta$ (see Lemma~\ref{lem:centralizers}\ref{it:cent-dihedral}), we conclude that $k \in \Delta$.
		
		Since $M \cong C_p$, we have $K/C_K(M) \hookrightarrow \Aut(C_p) \cong C_{p-1}$, which is cyclic. By Lemma~\ref{lem:cyclic-quotient}, $|K/C_K(M)|$ is a power of~$2$. Since $k$ inverts $M$ and $k^2 \in C_K(M)$, the image of $k$ in $K/C_K(M)$ has order~$2$, and $K/C_K(M) \cong C_2$.
		
		The kernel of the map $S \to K/C_K(M) \cong C_2$ is $S \cap C_K(M)$, which is a maximal subgroup of $S$ (of index~$2$). Since this map has abelian image, $[S,S] = \Delta^- \leq S \cap C_K(M)$.
		
		Now, $k \in \Delta \leq \Phi(W)$: indeed, $\Delta = \langle ab \rangle \leq \langle a^2, b^2, ab \rangle = \Phi(W)$ by Lemma~\ref{lem:wreathed-properties}\ref{it:frattini}. Since $\Phi(W)$ is contained in every maximal subgroup of $W$, and $S \cap C_K(M)$ is a maximal subgroup of $S = W$, it follows that $k \in \Phi(W) \leq S \cap C_K(M) \leq C_K(M)$.
		
		But $k \in C_K(M)$ means $k$ centralizes $M$, contradicting the fact that $k$ inverts $M$ (and $M \neq 1$). This contradiction shows that $G$ cannot be a minimal counterexample, completing the proof.
	\end{proof}
	
	\section*{Acknowledgements}
	The author acknowledges the funding support from MUR-Italy via PRIN 2022RFAZCJ ``Algebraic methods in Cryptanalysis''.
	
	\section*{Statements and Declarations}
	The author have no relevant financial or nonfinancial interests to disclose. Data availability statements: no datasets were generated or analyzed during the current study, and therefore no data are available to be shared.
	
	%% ========================================================================

\end{document}